\theoremstyle{plain}
\newtheorem{lemma}{Lemma}
\theoremstyle{remark}
\newtheorem{remark}{Remark}
\newcommand{\vertiii}[1]{{\left\vert\kern-0.25ex\left\vert\kern-0.25ex\left\vert #1 \right\vert\kern-0.25ex\right\vert\kern-0.25ex\right\vert}}
\newcommand{\inv}{{-1}}
\newcommand{\Jacobian}{\operatorname{D}}
\newcommand{\dif}{{\mathrm d}}
\newcommand{\grad}{\nabla}
\newcommand{\Laplace}{\Delta}
\newcommand{\divergence}{\operatorname{div}}
\newcommand{\Id}{\operatorname{Id}}
\newcommand{\trace}{\operatorname{tr}}
\newcommand{\supp}{\operatorname{supp}}
\newcommand{\RT}{{\bfR \bfT}}
\newcommand{\onehalf}{{\frac{1}{2}}}
\newcommand{\niceonehalf}{{\nicefrac{1}{2}}}
\newcommand{\bbN}{{\mathbb N}}
\newcommand{\bbR}{{\mathbb R}}
\newcommand{\calF}{{\mathcal F}}
\newcommand{\calN}{{\mathcal N}}
\newcommand{\bfH}{{\mathbf H}}
\newcommand{\bfI}{{\mathbf I}}
\newcommand{\bfL}{{\mathbf L}}
\newcommand{\bfQ}{{\mathbf Q}}
\newcommand{\bfR}{{\mathbf R}}
\newcommand{\bfT}{{\mathbf T}}
\newcommand{\bfX}{{\mathbf X}}
\newcommand{\scrH}{{\mathscr H}}
\title[Flux Reconstruction for Goal-Oriented A Posteriori Estimation]{Flux Reconstruction for\\Goal-Oriented A Posteriori Estimation}
\author{Martin Licht}
\address{UCSD Department of Mathematics, %
9500 Gilman Drive MC0112, %
La Jolla, CA 92093-0112, USA}
\email{mlicht@ucsd.edu}
\thanks{This research was supported by the European Research Council through the FP7-IDEAS-ERC Starting
Grant scheme, project 278011 STUCCOFIELDS}
\author{Matthias Maier}
\address{School of Mathematics, University of Minnesota, %
Minneapolis, Minnesota 55455, USA}
\email{msmaier@umn.edu}
\subjclass[2000]{65N30}
\keywords{A posteriori error estimation, dual weighted residual method,  
equilibrated error estimation, flux reconstruction, goal functional, 
quantity of interest}
\begin{document}



\begin{abstract}
 We propose a new heuristic goal-oriented a posteriori error estimator 
 that connects the dual weighted residual method 
 with equilibrated a posteriori error estimation. 
 Our numerical experiments demonstrate the practical reliability 
 of the error estimator, confirming theoretical predictions,
 as well as optimally convergent adaptivity 
 even over singular domains and coarse meshes.  
 The central algorithm is a localized flux reconstruction, 
 which has been implemented in the finite element library deal.II.
 For a solid preparation we assess the performance 
 of the equilibrated a posteriori error estimator 
 of the energy norm 
 in numerical experiments. 
 Moreover, we give what seems to be first rigorous discussion 
 in the numerical literature 
 of localized flux reconstruction over quadrilateral meshes 
 with hanging nodes. 
\end{abstract}

\maketitle


\section{Introduction}
\label{sec:introduction}


%
%
%
\emph{A posteriori error estimation} is an important concept 
in assessing the accuracy of finite element methods for partial differential equations. 
Many a posteriori error estimators not only bound the global error 
but also indicate the local contributions of the approximation error.
Thus, they are fundamental to \emph{adaptive finite element methods}
based on local mesh refinement. 
The overall importance of a posteriori estimation is reflected by
the large corpus of literature on this topic
(see \cite{ainsworth2011posteriori,repin2008posteriori,verfurth2013posteriori} 
and the references therein).
Typically, convergence is shown with respect to global energy norms. 
In many applications, however, we are more
interested in approximating a \emph{quantity of interest},
assumed (for simplicity) to be a linear functional of the true solution. 
\emph{Goal-oriented a posteriori error estimates} 
bound or approximate the error in the quantity of interest. 

\emph{Equilibrated error estimators} are provably reliable and are considered
to be among the most efficient residual-based error estimators
\cite{ainsworth2007analysis}. 
They come in several variants, whose common idea is to compute error
estimates by solving localized subproblems with data constructed from
the global right-hand sides and the approximate solution. In this article,
we focus on the family of equilibrated error estimators based on solving
local divergence equations over patches around mesh nodes (see also
\cite{kelly1984self,ladeveze1983error,luce2004local,braess2008equilibrated,braess2009equilibrated,carstensen2013effective,kim2015postprocessing}).
These error estimators are reliable, constant-free, and computable. 

The contributions of this article 
are both theoretical and experimental in nature. 
We report on an  
equilibrated a posteriori error estimators 
implemented in the finite element software library deal.II
\cite{dealII85}.
Our experiments with the Poisson problem 
show a typical overestimation of the energy error 
by 30--70\% and optimal convergence 
of adaptive finite element methods, 
in accordance with similar findings 
in the numerical literature \cite{carstensen2013effective}.

An innovation at this point 
is our exposition of the construction 
and well-posedness of the local divergence equations  
in the finite element flux reconstruction;
see Lemma~\ref{lemma:einzigeslemma}. 
To the best of our knowledge, 
we give the first such account 
for the case of quadrilateral meshes with hanging nodes.
Since deal.II and many other finite element software libraries 
employ this class of meshes, 
this article closes a practically relevant gap in the literature. 
We also remark that all our results are stated 
for mixed boundary conditions 
and in arbitrary dimension.

These theoretical and practical results 
prepare our successive contributions to goal-oriented 
a posteriori error estimation. 
We develop and assess a new heuristic 
goal-oriented error estimator, 
combining the basic idea of the dual weighted residual method 
\cite{becker2001optimal}
with techniques of equilibrated a posteriori error estimation. 

Our new error estimator requires finite element approximations 
for the original (primal) problem 
and the adjoint (dual) problem 
associated with the goal functional. 
In addition, it needs localized flux reconstructions for both problems. 
We show that our error estimator coincides with the true error 
in the quantity of interest up to a perturbance 
that depends on approximation errors of mixed finite element methods 
for the primal and the dual problem.
The latter term typically converges with higher order 
than the error estimator, depending on the regularity of the domain.
This is in accordance with our computational experiments, 
which demonstrate the practical reliability of the new estimator 
both on regular and singular domains. 
We furthermore observe optimal convergence of a corresponding 
goal-oriented adaptive finite element method. 

Computational experiments compare our error estimator 
with a dual weighted residual estimator 
and a goal-oriented error estimator proposed 
by Mozolevski and Prudhomme \cite{mozolevski2015goal}. 
The former leads to an optimally convergent 
goal-oriented adaptive finite element method 
but significantly underestimates the error 
on coarse triangulations 
and on singular domains. 
The latter gives reliable error estimates in practice 
but displays suboptimal adaptive convergence rates 
in our numerical experiments, 
which we attribute to oscillatory behavior. 
Our new error estimator consistently avoids those problems. 
The remainder of this article is structured as follows. 
In Section~\ref{sec:background} we review the analytical background. 
In Section~\ref{sec:hypercircle:poisson} we give a concise introduction to the
method of the hypercircle for the Poisson problem. 
Subsequently, Section~\ref{sec:flux} describes the localized flux reconstruction. 
Section~\ref{sec:hypercircle:qoi} discusses goal-oriented a posteriori error estimation. 
Finally, Section~\ref{sec:experiments} presents a number of numerical experiments. 
A conclusive summary follows in Section~\ref{sec:conclusion}.

\section{Analytical Background}
\label{sec:background}

%
%

In this section we recall a number of basic function spaces and some fundamental results.
Throughout the paper we let $\Omega \subset \bbR^{n}$ be a fixed bounded Lipschitz domain. 
Furthermore, we fix a pair $\Gamma_{D}$ and $\Gamma_{N}$ of relatively open subsets of $\partial\Omega$ 
such that 
$\partial\Omega = \overline\Gamma_{D} \cup \overline\Gamma_{N}$
and 
$\Gamma_{D} \cap \Gamma_{N} = \emptyset$.

Let $L^{p}(\Omega)$ denote the Lebesgue space over $\Omega$ with exponent $p \in [1,\infty]$. 
These spaces are equipped with the canonical $L^{p}$ norms $\|\cdot\|_{L^{p}}$.
In the case $p = 2$, this norm is induced by a canonical scalar product $\langle\cdot,\cdot\rangle_{L^{2}}$.
Furthermore, $\bfL^{p}(\Omega) = L^{p}(\Omega)^{n}$ denotes the Banach space of vector fields over $\Omega$ with coefficients in $L^{p}(\Omega)$. 
This space is equipped with the canonical norm $\|\cdot\|_{\bfL^{p}}$.
Again, this norm is induced by a scalar product $\langle\cdot,\cdot\rangle_{\bfL^{2}}$ in the case $p=2$.

We call a matrix field $A \in L^{\infty}(\Omega)^{n \times n}$ an \emph{admissible metric tensor} 
if $A$ is symmetric and invertible almost everywhere over $\Omega$ 
with $A^{\inv} \in L^{\infty}(\Omega)^{n \times n}$. 
Every such $A$ induces a bounded isomorphism $A : \bfL^{2}(\Omega) \rightarrow \bfL^{2}(\Omega)$
by multiplication
and induces the $A$-scalar product
$\langle \sigma, \tau \rangle_{A} := \langle \sigma, \tau \rangle_{\bfL^{2}_{A}(\Omega)} := \langle \sigma, A \tau \rangle_{\bfL^{2}}$
over $\bfL^{2}(\Omega)$, 
which is equivalent to the canonical scalar product on $\bfL^{2}(\Omega)$.

We write $H^{1}(\Omega)$ for the first-order Sobolev space over $\Omega$, 
and we let $\bfH(\Omega,\divergence)$ denote the subspace of 
$\bfL^{2}(\Omega)$ whose members have their divergences (a priori taken in the sense of distributions) in $L^{2}(\Omega)$. 
We equip $H^{1}(\Omega)$ and $\bfH(\Omega,\divergence)$ with the canonical scalar products. 
We let $H^{1}(\Omega,\Gamma_{D})$ be the closed subspace of $H^{1}(\Omega)$
whose members have vanishing trace along $\Gamma_{D}$.
Similarly, $\bfH(\Omega,\Gamma_{N},\divergence)$ shall denote the closed
subspace of
$\bfH(\Omega,\divergence)$ whose members have vanishing normal trace along
$\Gamma_{N}$. The integration-by-parts formula 
\begin{gather}
 \label{math:integrationbyparts}
 \int_{\Omega} \langle \grad v, \tau \rangle \;\dif x
 + 
 \int_{\Omega} \langle v, \divergence \tau \rangle \;\dif x
 =
 \oint_{\partial\Omega} \trace v \cdot \trace_{N} \tau \;\dif s
\end{gather}
holds for every $v \in H^{1}(\Omega)$ and $\tau \in \bfH(\Omega,\divergence)$.
Here, the boundary integral is understood in the generalized sense 
and vanishes if $v \in H^{1}(\Omega,\Gamma_{D})$ and $\tau \in \bfH(\Omega,\Gamma_{N},\divergence)$. 
We let $H^{-1}(\Omega,\Gamma_{N}) := H^{1}(\Omega,\Gamma_{D})^{\ast}$
denote the topological dual space of $H^{1}(\Omega,\Gamma_{D})$.
This is again a Hilbert space equipped with the canonical operator norm.
We also write $\langle\cdot,\cdot\rangle$ for the distributional pairing
between $H^{1}(\Omega,\Gamma_{D})$ and $H^{-1}(\Omega,\Gamma_{N})$. We have
a bounded operator
\begin{gather}
 \label{math:boundeddifferential:grad}
 \grad : 
 H^{1}(\Omega,\Gamma_{D})
 \rightarrow \bfL^{2}_{A}(\Omega),
 \quad 
 v \mapsto \grad v
 ,
\end{gather}
whose dual is given by the bounded operator 
\begin{gather}
 \label{math:boundeddifferential:div}
 - \divergence_{\Omega,\Gamma_{N}} : 
 \bfL^{2}_{A}(\Omega)
 \rightarrow 
 H^{-1}(\Omega,\Gamma_{N}),
 \quad 
 \tau \mapsto \langle \tau, \grad \cdot \rangle_{A} 
 .
\end{gather}
This extension of the divergence operator to $\bfL^{2}(\Omega)$ 
commutes with the natural embeddings.
We have the generalized integration-by-parts formula 
\begin{gather}
 \label{math:integrationbyparts:distributional}
 \langle \tau, \grad v \rangle_{A}
 = 
 \langle \divergence_{\Omega,\Gamma_{N}} \tau, v \rangle,
 \quad 
 \tau \in \bfL^{2}(\Omega), \quad v \in H^{1}(\Omega,\Gamma_{D}).
\end{gather}
For our discussion of the Poisson problem, 
we let $\scrH(\Omega,\Gamma_{D})$ denote the locally constant functions contained in $H^{1}(\Omega,\Gamma_{D})$.
This is just the span of the indicator functions of those connected components of $\Omega$
that do not touch $\Gamma_{D}$. 
We let $\scrH_{\Omega,\Gamma_{D}} v$ denote the $L^{2}$-orthogonal projection 
of any $v \in L^{2}(\Omega)$ onto $\scrH(\Omega,\Gamma_{D})$. 
The linear mapping $\scrH_{\Omega,\Gamma_{D}}$ extends 
naturally to $H^{\inv}_{N}(\Omega)$.

\begin{remark}
 On a connected domain, 
 the space $\scrH(\Omega,\Gamma_{D})$ will be 
 either the span of the constant function or trivial, 
 depending on whether $\Gamma_{D}$ is empty or not. 
 This notation will come in handy later in Section~\ref{sec:flux}, 
 where we consider local divergence equations 
 over varying subdomains of $\Omega$ with varying boundary conditions.
\end{remark}

With these definitions in mind, we state the following
Poincar\'e-Friedrichs inequalities. There exists a constant $C^{\rm PF}_{A} >
0$, depending only on $\Omega$, $\Gamma_{D}$ and $A$, such that for every $v \in
H^{1}(\Omega,\Gamma_{D})$ we have 
\begin{gather}
 \label{math:poincarefriedrichs:grad}
 \left\| v - \scrH_{\Omega,\Gamma_{D}} v \right\|_{L^{2}} 
 \leq 
 C^{\rm PF}_{A} \| \grad v \|_{A}
 .
\end{gather}
On the other hand, 
for every $F \in H^{-1}(\Omega,\Gamma_{N})$ there exists $\tau \in \bfL^{2}(\Omega)$ 
such that 
\begin{gather}
 \label{math:poincarefriedrichs:distdiv}
 - \divergence_{\Omega,\Gamma_{N}} \tau = F - \scrH_{\Omega,\Gamma_{D}} F,
 \quad
 \| \tau \|_{A}
 \leq
 C^{\rm PF}_{A} 
 \| F - \scrH_{\Omega,\Gamma_{D}} F \|_{H^{-1}(\Omega,\Gamma_N)} 
 .
\end{gather}
Note that $F \in L^{2}(\Omega)$ implies $\tau \in \bfH(\Omega,\Gamma_{N},\divergence)$
in \eqref{math:poincarefriedrichs:distdiv}. 

Finally, a modicum of Hodge theory is utilized throughout this article. 
We set 
\begin{align}
 \label{math:hodgecomponent}
 \begin{split}
  \bfX(\Omega,\Gamma_{N},A)
  &:= 
  \grad \left( H^{1}(\Omega,\Gamma_{D}) \right)^{\perp_{A}}
  =
  \left\{ \tau \in \bfL^{2}(\Omega) : \divergence_{\Omega,\Gamma_{N}} A \tau = 0 \right\}
  .
 \end{split}
\end{align}
This is precisely the Hilbert space of those $\tau \in \bfL^{2}(\Omega)$ 
for which $A\tau \in \bfH(\Omega,\divergence)$ 
has vanishing divergence and homogeneous normal trace along $\Gamma_{N}$.
Basic functional analysis gives the $A$-orthogonal \emph{Hodge-Helmholtz decomposition} 
\begin{gather}
 \label{math:hodgedecomposition}
 \bfL^{2}(\Omega) = \grad H^{1}(\Omega,\Gamma_{D}) \oplus_{A} \bfX(\Omega,\Gamma_{N},A)
 .
\end{gather}

\section{Error Estimates for the Poisson Problem}
\label{sec:hypercircle:poisson}

In this section we outline reliable error estimates for approximate
solutions of the Poisson problem based on the hypercircle identity
and its generalizations.
Here, our results rely on techniques in functional analysis,  
and no further details on the method of approximation 
are assumed at this point.

\subsection{Model Problem}
\label{subsec:hypercircle:poisson:modelproblem}

Our object of discussion is the partial differential equation 
\begin{gather}
 \label{math:poisson}
 - \divergence_{\Omega,\Gamma_{N}} A \grad u = F - \scrH_{\Omega,\Gamma_{D}} F,
 \quad 
 u \perp \scrH(\Omega,\Gamma_{D}), 
\end{gather}
where $F \in H^{-1}(\Omega,\Gamma_{N})$ is the data,
$A \in L^{\infty}(\Omega)^{n \times n}$ is a fixed admissible metric tensor, 
and the function $u \in H^{1}(\Omega,\Gamma_{D})$ is the unknown.
The Poisson equation in above form is precisely the weak formulation 
that characterizes $u$ by requiring 
\begin{subequations}
\label{math:poisson:weakformulation}
\begin{gather}
 \langle \grad u, \grad v \rangle_{A} = F\left( v - \scrH_{\Omega,\Gamma_{D}} v \right),
 \quad 
 v \in H^{1}(\Omega,\Gamma_{D}),
 \\
 u \perp \scrH(\Omega,\Gamma_{D}).
\end{gather}
\end{subequations}
The well-posedness of this problem 
follows from standard elliptic regularity theory.
For every right-hand side $F
\in H^{-1}(\Omega,\Gamma_{N})$ there exists a unique solution $u \in
H^{1}(\Omega,\Gamma_{D})$ of \eqref{math:poisson}, and we can estimate 
\begin{gather}
 \label{math:poisson:stability}
 \| u \|_{L^{2}}
 \leq 
 \left(C^{\rm PF}_{A}\right)^{2}
 \| F \|_{H^{-1}(\Omega,\Gamma_{N})},
 \quad 
 \| \grad u \|_{A}
 \leq 
 C^{\rm PF}_{A} 
 \| F \|_{H^{-1}(\Omega,\Gamma_{N})}
 .
\end{gather}
Throughout this section,  
a candidate approximation $u_{h} \in H^{1}(\Omega,\Gamma_{D})$ 
is assumed to be already known.
We want to compute information about the error $u - u_{h}$, 
such as bounds in Sobolev norms.
In our prospective applications, 
$u_{h}$ is computed with a Galerkin method for the Poisson problem,
but the exact solution $u$ is unknown.

\begin{remark}
  The Poisson equation is often studied only for square-integrable
  right-hand sides $F \in L^{2}(\Omega)$, so that $\grad u \in
  H(\Omega,\Gamma_{N},\divergence)$ by definition. We consider a
  distributional right-hand side in $H^{-1}(\Omega,\Gamma_{N})$ in this
  article, which allows for a neat formalism for inhomogeneous mixed
  boundary conditions.
\end{remark}

\subsection{Reliable Error Estimation}
\label{subsec:hypercircle:poisson:reliability}

We derive reliable error estimates in the energy seminorm via the
\emph{hypercircle method}. For that purpose we assume that $\sigma_{h} \in
\bfL^{2}(\Omega)$ is \emph{any} solution of the \emph{flux equation} 
\begin{gather}
 \label{math:fluxequation:discrete}
 - \divergence_{\Omega,\Gamma_{N}} A \sigma_{h} = F - \scrH_{\Omega,\Gamma_{D}} F.
\end{gather}
The availability of such $\sigma_{h}$ is stipulated for the time being.
The specific auxiliary computations 
that solve the flux equation \eqref{math:fluxequation:discrete} in applications
will be examined later in this article.

Under this assumption, it is elementary to derive 
\begin{align*}
 \| \grad u_h - \sigma_h \|^{2}_{A}
 &=
 \| \grad u_h - \grad u \|^{2}_{A}
 +
 \| \grad u   - \sigma_h \|^{2}_{A}
 +
 2\left\langle
  \grad \left( u_h - u \right),
  A \nabla u - A \sigma_h
 \right\rangle_{\bfL^{2}}
 .
\end{align*}
Since $\sigma_{h}$ solves the flux equation \eqref{math:fluxequation:discrete},
integration by parts yields
\begin{align*}
 \left\langle \grad \left( u_h - u \right), A \grad u - A \sigma_h \right\rangle_{\bfL^{2}}
 &=
 \left\langle u_h - u, \divergence_{\Omega,\Gamma_{N}} A \grad u - \divergence_{\Omega,\Gamma_{N}} A \sigma_h \right\rangle_{\bfL^{2}}
 \\
 &=
 \left\langle u_h - u, ( \Id - \scrH_{\Omega,\Gamma_{D}}) F - ( \Id - \scrH_{\Omega,\Gamma_{D}}) F \right\rangle
 \\
 &=
 0
 . 
\end{align*}
Thus we obtain what is known as the \emph{hypercircle identity}:
\begin{align}
 \label{math:hypercircle:slim}
 \| \grad u_h - \sigma_h \|^{2}_{A}
 &=
 \| \grad u_h - \grad u \|^{2}_{A}
 +
 \| \grad u   - \sigma_h \|^{2}_{A}
 .
\end{align}
We note that the left-hand side of \eqref{math:hypercircle:slim} 
is given explicitly in terms of entities that are computable by assumption. 
We thus get the simple and computable estimate
\begin{align}
 \label{math:simpleestimate}
 \| \grad u_h - \sigma_h \|_{A}
 &\geq 
 \| \grad u_{h}  - \grad u \|_{A}
 .
\end{align}
This bounds the error $u - u_{h}$ in the energy seminorm, 
which is the dominant term of the full $H^{1}$ error norm in typical applications, 
in a manner computable in terms of only $\grad u_{h}$ and $\sigma_{h}$.

\begin{remark}
 Any computable estimate for $\grad u - \grad u_{h}$ in the $A$-norm 
 gives an estimate for $u - u_{h}$ in the $L^{2}$-norm, 
 as follows by the Poincar\'e-Friedrichs inequality \eqref{math:poincarefriedrichs:grad}.
 This estimate is computable 
 if a bound for the Poincar\'e-Friedrichs constant $C^{\rm PF}_{A}$ can be computed.
 Such estimates are not within the scope of this article, 
 but we refer to the literature \cite{pauly2015maxwell} for research in that direction.
\end{remark}


\begin{remark}
 The error identity \eqref{math:hypercircle:slim} goes back to the seminal research 
 of Prager and Synge in the field of mathematical elasticity \cite{prager1947approximations}
 and is thus known as \emph{Prager-Synge identity} in the literature. 
 The underlying technique is also called \emph{hypercircle method} 
 because, as a consequence of Thales' theorem in Hilbert spaces, 
 the vector fields $\grad u$, $\grad u_{h}$, and $\sigma_{h}$
 are located on a common hypercircle in the space of square-integrable vector fields
 \cite{kikuchi2007remarks,vejchodsky2006guaranteed,vejchodsky2004local}.
 Furthermore, the technique is known as \emph{two-energy principle}
 because it compares an approximate minimizer of the Poisson energy 
 to an approximate minimizer of its conjugate energy \cite{braess2014equilibrated}.
 In the context of a posteriori error estimates,
 Equation~\eqref{math:simpleestimate} is called a \emph{constant-free}
 error estimate because it does not involve generic or uncertain constants,
 unlike, say, the classical residual error estimator
 \cite{verfurth2009note}. We also bring to attention that
 \emph{functional-type error estimates} \cite{pauly2009functional} utilize
 similar techniques.
\end{remark}

\subsection{Efficiency}
\label{subsec:hypercircle:poisson:efficiency}

Having established the reliable error estimate \eqref{math:simpleestimate},
we now discuss its efficiency.
The latter depends, broadly speaking, on the ``efficiency'' of 
$\sigma_{h}$ as a solution to the flux equation.
We first recall that 
\begin{gather*}
 \divergence_{\Omega,\Gamma_{N}} A ( \grad u - \sigma_{h} ) = F - F = 0
 .
\end{gather*}
Now $\grad u - \sigma_{h} \in \bfX(\Omega,\Gamma_{N},A)$
follows from the the definition of $\bfX(\Omega,\Gamma_{N},A)$
in Equation \eqref{math:hodgecomponent}. 
Thus there exists a unique $\theta_{h} := \grad u - \sigma_{h} \in \bfX(\Omega,\Gamma_{N},A)$ 
such that we have the $A$-orthogonal decomposition 
\begin{gather}
 \label{math:flux:hodgedecomposition}
 \sigma_{h} = \grad u + \theta_{h}.
\end{gather}
Via the Pythagorean theorem, we readily compute that 
\begin{gather*}
 \| \grad u_h - \sigma_h \|^{2}_{A}
 =
 \| \grad u_h - \grad u - \theta_{h} \|^{2}_{A}
 =
 \| \grad u_h - \grad u \|_{A}^{2} + \| \theta_{h} \|^{2}_{A}
 .
\end{gather*}
We quantify the efficiency of the error estimate \eqref{math:simpleestimate} as
\begin{gather}
 \label{math:efficiency}
 \dfrac{ \| \grad u_h - \sigma_h \|^{2}_{A} }{ \| \grad u_h - \grad u \|^{2}_{A} }
 =
 1
 +
 \dfrac{ \| \theta_h \|^{2}_{A} }{ \| \grad u_h - \grad u \|^{2}_{A} }
 .
\end{gather}
We conclude that the ``efficiency'' of the flux reconstruction
determines the efficiency of the error estimate:
the rigorously reliable error estimate \eqref{math:simpleestimate} 
will be the sharper, the smaller the norm of $\theta_{h} = \sigma_{h} - \nabla u$.
This generally depends on the specific construction of $\sigma_{h}$ in applications;
for a specific construction,
a bound of $\| \theta_h \|_{A}$ in terms of $\| \grad u_h - \grad u \|_{A}$
will be given later in this article.

\begin{remark}
 The relevance of the Hodge-Helmholtz decomposition 
 for equilibrated error estimation 
 is recognized in the published literature 
 \cite{carstensen2013effective}.
 Some simple postprocessing techniques are known to 
 effectively improve the approximation properties 
 of the numerically computed flux
 and thus the efficiency of the resulting 
 error estimates. 
%
\end{remark}

\subsection{Approximate Flux Reconstruction}
\label{subsec:hypercircle:poisson:reconstruction}

In many applications it may not be computationally feasible 
to construct a vector field $\sigma_{h} \in \bfL^{2}(\Omega)$ 
solving the flux equation \eqref{math:fluxequation:discrete} exactly. 
Instead, 
we may have a vector field $\sigma_{h} \in \bfL^{2}(\Omega)$ 
that solves an \emph{approximate flux equation} 
\begin{gather}
 \label{math:flux:equation:approximate}
 - \divergence_{\Omega,\Gamma_{N}} A \sigma_{h} = F_{h} - \scrH_{\Omega,\Gamma_{D}} F_{h}
\end{gather}
for some right-hand side $F_{h} \in H^{\inv}(\Omega,\Gamma_{N})$. 
In typical applications, $F_{h}$ is a given approximation of $F$ and the
variable $\sigma_{h}$ is constructed from $F_{h}$ as an approximate
solution of the original flux equation. 

For a reliable estimate of the $A$-norm of $\grad u - \grad u_{h}$, 
we formally introduce the unique solution $\widehat u \in H^{1}(\Omega,\Gamma_{D})$
of the \emph{approximate Poisson problem} 
\begin{gather*}
 - \divergence_{\Omega,\Gamma_{N}} A \grad \widehat u
 =
 F_{h} - \scrH_{\Omega,\Gamma_{D}} F_{h}
 ,
 \quad 
 \widehat u \perp \scrH(\Omega,\Gamma_{D})
 .
\end{gather*}
The triangle inequality states 
\begin{gather*}
 \left\| \grad u - \grad u_{h} \right\|_{A}
 \leq 
 \left\| \grad u - \grad \widehat u \right\|_{A}
 +
 \left\| \grad \widehat u - \grad u_{h} \right\|_{A}
 .
\end{gather*}
On the one hand, 
the Poincar\'e-Friedrichs inequality \eqref{math:poincarefriedrichs:distdiv} gives 
\begin{gather*}
 \left\| \grad u - \grad \widehat u \right\|_{A}
 \leq 
 C^{\rm PF}_{A}
 \left\| F - F_{h} \right\|_{H^{-1}(\Omega,\Gamma_{N})}
 . 
\end{gather*}
On the other hand,
when considering $u_{h}$ as an approximation to $\widehat u$,
the classical hypercircle identity gives 
\begin{gather*}
 \left\| \grad \widehat u - \grad u_{h} \right\|_{A}
 \leq 
 \left\| \sigma_{h} - \grad u_{h} \right\|_{A}
 .
\end{gather*}
In summary, we get the reliable error estimate 
\begin{gather}
 \label{math:reliableerrorestimate}
 \left\| \grad u - \grad u_{h} \right\|_{A}
 \leq 
 C^{\rm PF}_{A}
 \| F - F_{h} \|_{H^{-1}(\Omega,\Gamma_{N})}
 +
 \| \sigma_{h} - \grad u_{h} \|_{A}
 .
\end{gather}
If $A \sigma_{h} \in H_{N}(\divergence,\Omega)$ and $F \in L^{2}(\Omega)$, 
then the negative Sobolev norms in \eqref{math:reliableerrorestimate} 
can be bounded by the $L^{2}$-norm of $F - F_{h}$.
The entire estimate is fully computable provided that 
an estimate for the Poincar\'e-Friedrichs constant is known.

Whereas this addresses the reliability, 
we also address the efficiency of \eqref{math:reliableerrorestimate}.  
This generally depends on the Hodge-Helmholtz decomposition 
of the flux variable $\sigma_{h}$, 
whose component in $\bfX(\Omega,\Gamma_{N},A)$ we desire to be as small as possible,
and on the consistency error $F_{h} - F_{h}$. 
By construction, 
there exists $\widehat \theta_{h} \in \bfX(\Omega,\Gamma_{N},A)$
with 
\begin{gather*}
 \widehat \theta_{h} = \sigma_{h} - \grad \widehat u.
\end{gather*}
The efficiency of the error estimate \eqref{math:reliableerrorestimate}
is the right-hand side of that inequality 
divided by the true error $\left\| \grad u - \grad u_{h} \right\|_{A}$.
With the observation 
\begin{align*}
 \| \sigma_{h} - \grad u_{h} \|_{A}
 &\leq 
 \| \widehat \theta_{h} \|_{A}
 +
 \| \grad \widehat u - \grad u \|_{A}
 +
 \| \grad u - \grad u_{h} \|_{A}
 \\
 &\leq 
 \| \widehat \theta_{h} \|_{A}
 +
 C^{\rm PF}_{A}
 \| F - F_{h} \|_{H^{\inv}(\Omega,\Gamma_{N})}
 +
 \| \grad u - \grad u_{h} \|_{A}
\end{align*}
we observe that the efficiency of \eqref{math:reliableerrorestimate} 
satisfies the computable bounds 
\begin{align}
 \label{math:efficiencyofestimate}
 \begin{split}
 1
 \leq\;
 &
 C^{\rm PF}_{A}
 \dfrac{
 \| F - F_{h} \|_{H^{-1}(\Omega,\Gamma_{N})}
 }{
 \| \grad u - \grad u_{h} \|_{A}
 }
 +
 \dfrac{
 \| \sigma_{h} - \grad u_{h} \|_{A}
 }{
 \| \grad u - \grad u_{h} \|_{A}
 }
 \\&\quad 
 \leq 
 1
 +
 2 C^{\rm PF}_{A}
 \dfrac{
 \| F - F_{h} \|_{H^{-1}(\Omega,\Gamma_{N})}
 }{
 \| \grad u - \grad u_{h} \|_{A}
 }
 +
 \dfrac{
 \| \widehat \theta_{h} \|_{A}
 }{
 \| \grad u - \grad u_{h} \|_{A}
 }
 .
 \end{split}
\end{align}
We rephrase this result in a suggestive manner:
the generalized estimate \eqref{math:efficiencyofestimate} is efficient provided that $\sigma_{h}$
is an efficient solution to an efficient approximation of the original flux equation.
Note that \eqref{math:efficiency} is recovered in the special case $F = F_{h}$.

\begin{remark}
 In many applications,
 not only the true right-hand side $F$ is approximated by an approximate right-hand side $F_{h}$,
 but also the true coefficient $A$ is approximated by an approximate coefficient $A_{h}$
 assumed to be an admissible metric tensor. 
 Suppose that $\sigma'_{h} \in \bfL^{2}(\Omega)$ solves the approximate flux equation 
 \begin{gather*}
  - \divergence_{\Omega,\Gamma_{N}} A_{h} \sigma'_{h} = F_{h} - \scrH_{\Omega,\Gamma_{D}} F_{h}
 \end{gather*}
 with the approximate coefficient $A_{h}$. 
 Via the simple observation $A_{h} = A \left( A^{\inv} A_{h} \right)$
 we can apply the error estimate \eqref{math:efficiencyofestimate}
 with the flux variable $\sigma_{h} = A^{\inv} A_{h} \sigma_{h}'$. 
 So the case of approximate coefficients 
 can be reduced to the case of exact coefficients. 
\end{remark}

\subsection{Residual Flux Reconstruction}
\label{subsec:hypercircle:poisson:residual}

In the next section, it will be conceptually helpful to use a variation 
of the flux equation \eqref{math:fluxequation:discrete} (or \eqref{math:flux:equation:approximate}, respectively).
Suppose that $F_{h} \in H^{-1}(\Omega,\Gamma_{N})$
is any approximate right-hand side.
We define the \emph{residual}
$r_{h} \in H^{-1}(\Omega,\Gamma_{N})$ by
\begin{gather}
 \label{math:residualdefinition}
 r_{h} = F_{h} - \scrH_{\Omega,\Gamma_{D}} F_{h} + \divergence_{\Omega,\Gamma_{N}} A \grad u_{h}.
\end{gather}
Given any solution $\varrho_{h} \in \bfL^{2}(\Omega)$
of the \emph{residual flux equation} 
\begin{gather}
 \label{math:residualfluxequation}
 - \divergence_{\Omega,\Gamma_{N}} A \varrho_{h} = r_{h},
\end{gather}
the vector field $\sigma_{h} = \varrho_{h} + \grad u_{h}$ 
solves the approximate flux equation \eqref{math:flux:equation:approximate}. 
In many applications it is easier to first solve the residual flux equation \eqref{math:residualfluxequation} 
with right-hand side $r_{h}$ and derive $\sigma_{h}$ in the aforementioned manner, 
rather than directly solving the flux equation with the effective right-hand side $F_{h}$. 
This is intuitive: the approximate solution $u_{h}$ typically depends on the right-hand side $F_{h}$ 
and thus contains additional information on $F_{h}$ that can be used in the flux reconstruction. 

\section{Finite Element Flux Reconstruction}
\label{sec:flux}

In this section we discuss the implementation of a local flux reconstruction
over quadrilateral finite element meshes with hanging nodes.
In particular, we give a formal proof of the well-posedness 
of the local problems (see Lemma~\ref{lemma:einzigeslemma}),
and we keep the discussion independent of the spatial dimension. 
For the context and motivation of this section, 
we recall that the abstract error estimates of the previous section 
require a solution of the flux equation \eqref{math:fluxequation:discrete}, 
or equivalently, of the residual flux equation \eqref{math:residualfluxequation}. 
The localized flux reconstruction efficiently computes 
such a solution in finite element spaces. 

The main ideas of the local flux reconstruction can be found 
in the literature for other types of meshes \cite{braess2009equilibrated}. 
Classical finite element over quadrilateral non-conforming meshes 
are described in \cite{brezzi2012mixed}. 

\subsection{Finite Element Spaces}
\label{subsec:flux:fem}

%
%
Let $\Omega_{h}$ be a partition of a polygonal domain $\Omega$
into convex, non-degenerate quadrilaterals. 
We relax the usual \emph{form regularity} 
by allowing \emph{hanging nodes} of at most one level.
Let $\calN_{h}$ be the set of nodes of the partition $\Omega_{h}$ 
and let $\calN_{h}^I \cup \calN_{h}^H$ 
be a partition of $\calN_{h}$ 
into unconstrained nodes $\calN_{h}^I$
and hanging nodes $\calN_{h}^H$; see Figure~\ref{fig:patch}.
In addition, we let $\calN_{h}^{I}(K)$ be the set of nodes in $\calN_{h}^{I}$
that are contained in an element $K \in \Omega_{h}$. 
We write $\calF_{h}$ for the set of faces of the quadrilaterals  
in the partition. We let $\calF_{h}^{I}$ denote the subset of $\calF_{h}$
whose members are not contained in any other member of $\calF_{h}$,
and we let $\calF_{h}^{H} := \calF_{h} \setminus \calF_{h}^{I}$. 

We let $\Gamma_{D} \subseteq \partial\Omega$ be a subset of the boundary 
that is the union of faces of the partition.
We also let $\Gamma_{N} \subseteq \partial\Omega$
be the essential complement of $\Gamma_{D}$ in $\partial\Omega$,
which is again a union of faces of the partition.


\begin{figure}[t]
  \centering
  \subfloat[]{
    \begin{tikzpicture}[]
      \draw[step=1.0, thick] (-0.2,-0.2) grid (4.2,2.2);
      \draw[step=0.5, thick] (1.0,-0.2) grid (3.0,1.0);
      \draw[step=0.5, thick] (3.0,-0.2) grid (4.2,2.2);
      \node at (1.0,0.5) {$\times$};
      \node at (1.5,1.0) {$\times$};
      \node at (2.5,1.0) {$\times$};
      \node at (3.0,1.5) {$\times$};
      \draw[fill] (2.0,1.0) circle[radius=1.5pt];
      \node at (2.2,1.2) {$p$};
      \draw[fill] (3.0,1.0) circle[radius=1.5pt];
      \node at (3.2,1.2) {$q$};
    \end{tikzpicture}}
  \subfloat[]{
    \begin{tikzpicture}[]
      \draw[step=1.0, white] (0.5,-0.2) grid (3.5,2.2);
      \draw[step=1.0, thick] (1.0,1.0) grid (3.0,2.0);
      \draw[step=0.5, thick] (0.99,0.5) grid (3.0,1.0);
      \node at (1.5,1.0) {$\times$};
      \node at (2.5,1.0) {$\times$};
      \draw[fill] (2.0,1.0) circle[radius=1.5pt];
      \node at (2.2,1.2) {$p$};
    \end{tikzpicture}}
  \subfloat[]{
    \begin{tikzpicture}[]
      \draw[step=1.0, white] (1.5,-0.2) grid (4.2,2.2);
      \draw[step=1.0, thick] (2.0,1.0) grid (3.0,2.0);
      \draw[step=0.5, thick] (1.99,0.5) grid (3.0,1.0);
      \draw[step=0.5, thick] (3.0,0.5) grid (3.5,2.0);
      \node at (2.5,1.0) {$\times$};
      \node at (3.0,1.5) {$\times$};
      \draw[fill] (3.0,1.0) circle[radius=1.5pt];
      \node at (3.2,1.2) {$q$};
    \end{tikzpicture}}
  \caption{
    A quadrilateral mesh (a) with hanging nodes ($\times$) and
    reconstructed patches around nodes $p\in\calN_{h}^I$ (b) and
    $q\in\calN_{h}^I$ (c).
  }
  \label{fig:patch}
\end{figure}
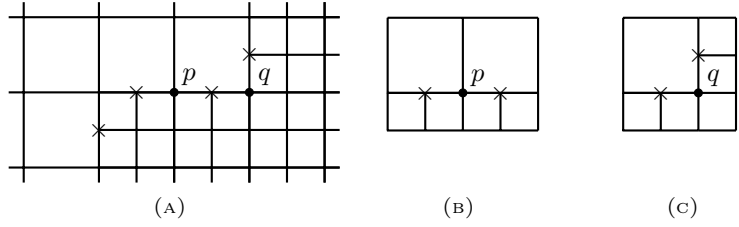

%
%
We let $\widehat K$ denote the unit square, 
and for every full-dimensional cell $K \in \Omega_{h}$
we let $\boldsymbol\Psi_{K}\;:\;\widehat K\to K$
be a fixed affine reference transformation. 
We let $C_{h} > 0$ be the \emph{shape-constant} of the partition $\Omega_{h}$,
which is defined as the minimal non-negative number satisfying
$\| \Jacobian\boldsymbol\Psi_{K} \|_{L^{\infty}(\widehat K)} < C_{h}$
and
$\| \Jacobian\boldsymbol\Psi_{K}^{\inv} \|_{L^{\infty}(K)} < C_{h}$
for every full-dimensional quadrilateral $K \in \Omega_{h}$. 
\\



We define several finite element spaces,
beginning with scalar functions.
We write $\bfQ^{r}(K)$ and $\bfQ^{r}(F)$ for the space of polynomials 
over any full-dimensional cell $K \in \Omega_{h}$ 
and any codimension-one face $F \in \calF_{h}$ of the partition, respectively,
with maximum degree $r$ in each variable.
We let $\bfQ^{r}_{-1}(\Omega_{h})$ denote the space of functions 
that are piecewise polynomials of degree at most $r$ in each variable 
with respect to the partition $\Omega_{h}$. 
We have
\begin{gather*}
 \bfQ^{r}_{-1}(\Omega_{h})
 :=
 \sum_{ K \in \Omega_{h} }
 \bfQ^{r}(K)
 . 
\end{gather*}
We define $\bfQ^{r}(\Omega_{h}) := H^{1}(\Omega) \cap \bfQ^{r}_{-1}(\Omega_{h})$
as the $H^{1}$-conforming subspace of $\bfQ^{r}_{-1}(\Omega_{h})$,
and we define $\bfQ^{r}(\Omega_{h},\Gamma_{D}) := H^{1}(\Omega,\Gamma_{D}) \cap \bfQ^{r}_{-1}(\Omega_{h})$
as the subspace thereof whose members have vanishing trace along $\Gamma_{D}$.

\begin{remark}
  The conformity requirement $\bfQ^{r}(\Omega_{h}) \subset H^{1}(\Omega)$
  is typically enforced by imposing additional interpolatory constraints on
  degrees of freedom associated with hanging nodes (\emph{hanging-node
  constraints}); see \cite{carstensen2009hanging} for a detailed
  discussion.
\end{remark}

The space $\bfQ^{r}_{-1}(\Omega_{h})$ can be embedded into the space of distributions
over the test space $H^{1}(\Omega,\Gamma_{D})$,
i.e., $\bfQ^{r}_{-1}(\Omega_{h}) \subseteq L^{2}(\Omega) \subseteq H^{-1}(\Omega,\Gamma_{N})$. 
More generally, every $\bfQ^{r}(F)$ for $F \in \calF^{I}_{h}$ with $F \nsubseteq \Gamma_{N}$
can be identified with a distribution over $H^{1}(\Omega,\Gamma_{D})$
by taking the trace onto $F$ and integrating.
Hence we define 
\begin{gather*}
 \bfQ^{r}_{-2}(\Omega_{h},\Gamma_{N})
 :=
 \bfQ^{r}_{-1}(\Omega_{h})
 +  
 \sum_{ \substack{ F \in \calF_{h}^{I} \\ F \nsubseteq \Gamma_{N} } }
 \bfQ^{r}(F)
 ,  
\end{gather*}
which is a distributional finite element subspace of $H^{-1}(\Omega,\Gamma_{N})$.

We also consider finite element spaces of vector fields. 
We let $\RT^{r}(K)$ be the Raviart-Thomas space 
over the full-dimensional cell $K \in \bfQ_{h}$,
which is formally defined as 
\begin{gather*}
 \RT^{r}(K)
 :=
 \bigoplus_{i=1}^{n}
 \left( \bfQ^{r}(K) + x_{i} \bfQ^{r}(K) \right)
 .
\end{gather*}
In other words, the components of each member of $\RT^{r}(K)$
are polynomials of degree at most $r$ in each coordinate variable 
except for the $i$-th variable, which has at most degree $r+1$.

Successively, we introduce the broken Raviart-Thomas space $\RT_{-1}^{r}(\Omega_{h})$
of degree $r$ with respect to the partition $\Omega_{h}$, 
(see also \cite{arnold2015finite}), 
which is 
\begin{gather*}
 \RT_{-1}^{r}(\Omega_{h})
 :=
 \sum_{ K \in \Omega_{h} }
 \RT_{-1}^{r}(K)
 . 
\end{gather*}
We define $\RT^{r}(\Omega_{h},\Gamma_{N}) := \bfH(\Omega,\Gamma_{N},\divergence) \cap \RT_{-1}^{r}(\Omega_{h})$. 
We obviously have a well-defined divergence operator 
\begin{gather*}
 \divergence : \RT^{r}(\Omega_{h},\Gamma_{N}) \rightarrow \bfQ^{r}_{-1}(\Omega_{h}).
\end{gather*}
More generally, we introduce the piecewise divergence operator 
\begin{gather*}
 \divergence_{h} : \RT^{r}_{-1}(\Omega_{h}) \rightarrow \bfQ^{r}_{-1}(\Omega_{h}).
\end{gather*}
In addition to that, we consider the jump term operator
\begin{gather*}
 [\cdot]_{\Omega_{h},\Gamma_N} 
 : 
 \RT^{r}_{-1}(\Omega_{h}) 
 \rightarrow 
 \sum_{ \substack{ F \in \calF_{h}^{I} \\ F \nsubseteq \Gamma_{N} } } \bfQ^{r}(F)
 .
\end{gather*}
Finally, the divergence operator on the broken Raviart-Thomas space 
is reintroduced in the sense of distributions:
\begin{gather*}
 \divergence_{\Omega,\Gamma_N} : \RT^{r}_{-1}(\Omega_{h}) \rightarrow \bfQ^{r}_{-2}(\Omega_{h},\Gamma_{N}),
 \quad 
 \tau_{h} 
 \mapsto
 \divergence_{h} \tau_{h} 
 -
 [\tau_{h}]_{\Omega_{h},\Gamma_{N}}
 .
\end{gather*}
A fundamental observation, 
which will be proven shortly, 
is that for every $s_{h} \in \bfQ^{r}_{-2}(\Omega_{h},\Gamma_{N})$
there exists $\varrho_{h} \in \RT^{r}_{-1}(\Omega_{h})$
that solves the flux equation 
\begin{gather*}
 - \divergence_{\Omega,\Gamma_N} \varrho_{h} = s_{h} - \scrH_{\Omega,\Gamma_{D}} s_{h}
 .
\end{gather*}
This shows the purpose of our definition of $\bfQ^{r}_{-2}(\Omega_{h},\Gamma_{N})$
as giving the right target space for the distributional divergence 
on the broken Raviart-Thomas space.
Let us now prove this existence result. 

\begin{lemma}
 \label{lemma:einzigeslemma}
 Let $s_{h} \in \bfQ^{r}_{-2}(\Omega_{h},\Gamma_{N})$.
 Then there exists $\varrho_{h} \in \RT^{r}_{-1}(\Omega_{h})$
 such that 
 \begin{gather*}
  - \divergence_{\Omega,\Gamma_N} \varrho_{h} = s_{h} - \scrH_{\Omega,\Gamma_{D}} s_{h}
  .
 \end{gather*}
\end{lemma}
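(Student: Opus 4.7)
The plan is to use a partition-of-unity construction by hat functions associated to unconstrained vertices, thereby reducing the global existence problem to a collection of local problems on vertex patches that are each solvable within the Raviart-Thomas space. This is the standard route in the equilibrated error estimation literature, and the goal here is to adapt it cleanly to the quadrilateral mesh with hanging nodes.

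First I would fix a family $\{\psi_{p}\}_{p \in \calN_{h}^I} \subset \bfQ^{1}(\Omega_{h})$ of hat functions indexed by the unconstrained nodes, respecting the hanging-node constraints, so that $\sum_{p \in \calN_{h}^I} \psi_{p} \equiv 1$ on $\overline\Omega$. Each $\psi_{p}$ has support on a local patch $\omega_{p}$ consisting of the quadrilaterals touching $p$, together with the neighboring cells whose hanging nodes lie on faces adjacent to $p$, as illustrated in Figure~\ref{fig:patch}. Setting $\widetilde{s}_{h} := s_{h} - \scrH_{\Omega,\Gamma_{D}} s_{h}$, I would then localize by testing against $\psi_{p}$: the resulting local data $s_{h,p}$ is supported in $\omega_{p}$, sits in the natural patchwise analogue of $\bfQ^{r}_{-2}$, and annihilates $\scrH(\omega_{p}, \partial\omega_{p} \cap \Gamma_{D})$. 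When $\omega_{p}$ avoids $\Gamma_{D}$ this compatibility reads $\langle s_{h,p}, 1 \rangle = 0$, which follows from the $L^{2}$-orthogonality of $\widetilde{s}_{h}$ to $\scrH(\Omega,\Gamma_{D})$ combined with the partition-of-unity property $\sum_{p} \psi_{p} = 1$.

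For each patch I would next solve a local mixed divergence problem in a local Raviart-Thomas space with vanishing normal trace on $\partial\omega_{p} \setminus \Gamma_{N}$, producing a local flux $\varrho_{h,p} \in \RT^{r}_{-1}(\Omega_{h})$ supported in $\omega_{p}$. Summation then yields the global candidate $\varrho_{h} := \sum_{p \in \calN_{h}^I} \varrho_{h,p}$, whose distributional divergence, upon undoing the localization and exploiting that $\divergence_{\Omega,\Gamma_{N}}$ is linear in the cellwise divergence and the face jumps, equals $-\widetilde{s}_{h}$ as required.

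The main obstacle, and the novel content of the lemma, lies in the well-posedness of the local mixed problem on those patches $\omega_{p}$ that contain hanging nodes, as in Figure~\ref{fig:patch}(b)--(c). On such patches the local Raviart-Thomas space is nonstandard, and its divergence must be shown to be surjective onto the annihilator of the locally constant functions on the patch. I would establish this by recasting the local problem as a square mixed system whose primal unknown sits in the hanging-node-constrained $\bfQ^{r}$-space on $\omega_{p}$, and by verifying a discrete inf-sup condition on the patch. The verification reduces to a mesh-independent, finite list of reference patch configurations dictated by the single-level hanging-node rule, on each of which surjectivity can be checked by dimension counting against the kernel of the divergence, or by an explicit local construction of a preimage subcell by subcell. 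Once this local surjectivity is in place, the global statement is immediate from the summation step.
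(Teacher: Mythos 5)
Your localization step breaks down at the compatibility condition, and this is a genuine gap rather than a technicality. You deduce $\langle s_{h,p}, 1\rangle = \langle \widetilde s_h, \psi_p\rangle = 0$ from the global orthogonality $\widetilde s_h \perp \scrH(\Omega,\Gamma_D)$ together with $\sum_p \psi_p = 1$. But the global orthogonality only gives $\sum_p \langle \widetilde s_h, \psi_p\rangle = 0$; it says nothing about the individual summands. Take for $s_h$ the indicator function of a single interior cell (with $\Gamma_D\neq\emptyset$, so that $\scrH_{\Omega,\Gamma_{D}} s_h = 0$ and $\widetilde s_h = s_h$): then $\langle \widetilde s_h,\psi_p\rangle>0$ for the unconstrained nodes $p$ of that cell, while the local flux on such an interior patch must have vanishing normal trace on all of $\partial\omega_p$, so its divergence integrates to zero over $\omega_p$ and the local problem is unsolvable. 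The condition $\widetilde s_h(\psi_p) = 0$ for every unconstrained node $p\notin\Gamma_D$ is precisely Galerkin orthogonality of a residual against the lowest-order hat functions; it is an \emph{additional hypothesis} that the lemma, stated for arbitrary $s_h\in\bfQ^{r}_{-2}(\Omega_h,\Gamma_N)$, does not supply. This is exactly why the partition-of-unity construction in Section~\ref{subsec:flux:localized} is applied only to the residual $r_h$ of a Galerkin solution, where $r_h^{V}(\psi^{V})=0$ holds, and why that construction \emph{invokes} the present lemma on each patch rather than proving it. A secondary slip: your local boundary conditions are inverted --- the local flux must have vanishing normal trace on $\partial\omega_p\setminus\overline{\gamma^{p}_{D}}$, i.e.\ everywhere except the Dirichlet part, not on $\partial\omega_p\setminus\Gamma_N$.

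The paper's proof avoids localization entirely. It first eliminates the face (jump) part of $s_h$ by an explicit broken Raviart--Thomas field with prescribed jumps, reducing to a square-integrable right-hand side $s_h'\in\bfQ^{r}_{-1}(\Omega_h)$; it then solves $\divergence\tau=s_h'$ at the continuous level using the Poincar\'e--Friedrichs inequality \eqref{math:poincarefriedrichs:distdiv} and maps $\tau$ into $\RT^{r}(\Omega_h,\Gamma_N)$ by a canonical interpolator shown --- and this is where the hanging nodes require care --- to commute with the divergence. If you want to retain a patch-based argument, you would either have to restrict the statement to residual-type data satisfying $\widetilde s_h(\psi_p)=0$ for all $p$, or add a mechanism (for instance, sweeping the nonzero local means along a spanning tree of patches) to restore local compatibility; as written, the proposal does not prove the stated lemma.
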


\begin{proof}
%
 We fix $s_{h} \in \bfQ^{r}_{-2}(\Omega_{h},\Gamma_{N})$,
 which by definition can be written in the form 
 \begin{gather*}
  s_{h} = \sum_{ K \in \Omega_{h} } f_{K} + \sum_{ F \in \calF^{I}_{h} } j_{F}
  ,
 \end{gather*}
 where $f_{K} \in \bfQ^{r}(K)$ for each $K \in \Omega_{h}$ 
 and $j_{F} \in \bfQ^{r}(F)$ for each $F \in \calF^{I}_{h}$. 
 
 We first note that there exists $\varrho'_{h} \in \RT^{r}_{-1}(\Omega_{h})$ such that 
 \begin{gather*}
  - [ \varrho'_{h} ]_{\Omega_{h},\Gamma_{N}} = \sum_{ F \in \calF^{I}_{h} } j_{F}.
 \end{gather*}
 We set $s_{h}' := s_{h} + \divergence_{\Omega,\Gamma_{N}} \varrho'_{h}
 = s_{h} + \divergence_{h} \varrho'_{h}$.
 By construction, $s_{h}' \in \bfQ^{r}_{-1}(\Omega_{h})$,
 and $s_{h}'$ annihilates $\scrH(\Omega,\Gamma_{D})$ 
 if and only if 
 $s_{h}$ annihilates $\scrH(\Omega,\Gamma_{D})$. 
 It remains to construct $\tau_{h} \in \RT^{r}(\Omega_{h},\Gamma_{N})$ 
 such that $\divergence_{\Omega,\Gamma_{N}} \tau_{h} = s_{h}'$
 because then the desired vector field $\varrho_{h} \in \RT^{r}_{-1}(\Omega)$
 is given by 
 \begin{gather*}
  \varrho_{h} := \varrho_{h}' + \tau_{h}
  .
 \end{gather*}
 For this purpose we introduce the canonical interpolators
 \begin{gather*}
  \bfI\left[\RT^{r}(\Omega)\right]
  : C^{\infty}(\Omega)^{n} \rightarrow \RT^{r}(\Omega),
  \\
  \bfI\left[\bfQ^{r}(\Omega)\right] 
  : C^{\infty}(\Omega) \rightarrow \bfQ^{r}(\Omega).
 \end{gather*}
 The former interpolator is defined by 
 \begin{gather*}
  \int_{K} \bfI\left[\RT^{r}(\Omega)\right] \tau \cdot \phi_{K}
  =
  \int_{K} \tau \cdot \phi_{K},
  \quad 
  \phi_{K} 
  \in 
  \bigoplus_{i=1}^{n} \partial_{i} \bfQ^{r}(K)
  ,
 \end{gather*}
 for all $K \in \Omega_{h}$
 and by 
 \begin{gather*}
  \int_{F} \trace_{F} \bfI\left[\RT^{r}(\Omega)\right] \tau \cdot \xi_{K}
  =
  \int_{F} \trace_{F} \tau \cdot \xi_{K},
  \quad 
  \xi_{K} \in \bfQ^{r}(F) 
 \end{gather*}
 for $F \in \calF^{I}_{h}$.
 The latter interpolator is defined by 
 \begin{gather*}
  \int_{K} \bfI\left[\bfQ^{r}(\Omega)\right] f \cdot g_{K}
  =
  \int_{K} f \cdot g_{K},
  \quad 
  K \in \Omega_{h},
  \quad 
  g_{K} \in \bfQ^{r}(K)
 \end{gather*}
 for $f \in C^{\infty}(\Omega)$. 
 Indeed, it follows from the discussion in Section~5 of \cite{arnold2015finite}
 that this defines members of the finite element spaces
 $\RT^{r}(\Omega_{h})$ and $\bfQ^{r}_{-1}(\Omega_{h})$, respectively.
 
 For every $\tau \in C^{\infty}(\Omega)^{n}$, 
 $K \in \Omega_{h}$, and $g_{K}\in\bfQ^{r}(K)$
 we find 
 \begin{align*}
  \int_{K} \bfI\left[\bfQ^{r}(\Omega)\right] \divergence \tau \cdot g_{K}
  &=
  \int_{K} \divergence \tau \cdot g_{K}
  \\&=
  \int_{K} \tau \cdot \nabla g_{K}
  +
  \int_{\partial K} \trace_{N} \tau \cdot \trace_{} g_{K}
  \\&=
  \int_{K} \bfI\left[\RT^{r}(\Omega)\right] \tau \cdot \nabla g_{K}
  +
  \int_{\partial K} \trace_{N} \tau \cdot \trace_{} g_{K}
  . 
 \end{align*}
 The boundary integral is given as a combination of face integrals
 over $K$. On a uniform mesh, we could now replace the face integrals
 of $\tau$ by the face integrals of $\bfI\left[\RT^{r}(\Omega)\right] \tau$
 and revert the integration by parts. But since we allow meshes 
 with hanging nodes, it may happen that the faces of $K$ do not represent 
 degrees of freedom over $\RT^{r}(\Omega)$.
 The trick is that a repeated application of the integration 
 by parts formula allows us to express the integral as a combination 
 of degrees of freedom of $\tau$ associated to further full-dimensional 
 quadrilaterals and associated to faces in $\calF^{I}_{h}$. 
 We can then replace these by degrees of freedom of 
 $\bfI\left[\RT^{r}(\Omega)\right] \tau$ and apply the integration by parts
 formulas in the reverse order to obtain that 
 \begin{align*}
  \int_{K} \bfI\left[\bfQ^{r}(\Omega)\right] \divergence \tau \cdot g_{K}
  =
  \int_{K} \divergence \bfI\left[\RT^{r}(\Omega)\right] \tau \cdot g_{K}
  . 
 \end{align*}
 This eventually implies that 
 \begin{gather*}
  \divergence \bfI\left[\RT^{r}(\Omega)\right] \tau
  =
  \bfI\left[\bfQ^{r}(\Omega)\right] \divergence \tau
  .
 \end{gather*}
 So the canonical interpolants commute with the exterior derivative. 
 
 Since functions in $\bfH(\Omega,\divergence)$
 have well-defined traces on the codimension one faces of $\Omega_{h}$
 with regularity $\bfH^{-\onehalf}$,
 we conclude that we have bounded operators 
 \begin{gather*}
  \bfI\left[\RT^{r}(\Omega)\right]
  : \bfH(\Omega,\divergence) \rightarrow \RT^{r}(\Omega) \subset \bfL^{2}(\Omega),
  \\
  \bfI\left[\bfQ^{r}(\Omega)\right] 
  : L^{2}(\Omega) \rightarrow \bfQ^{r}(\Omega) \subset L^{2}(\Omega).
 \end{gather*}
 Moreover, when $\tau \in \bfH(\Omega,\divergence)$
 has vanishing normal trace on the faces of $\Gamma_{N}$,
 then the same holds true for its canonical interpolation 
 $\bfI\left[\RT^{r}(\Omega)\right] (\tau)$. 
 In particular, we have a bounded operator 
 \begin{gather*}
  \bfI\left[\RT^{r}(\Omega)\right]
  : \bfH(\Omega,\divergence,\Gamma_{N}) \rightarrow \RT^{r}(\Omega_{h},\Gamma_{N}) \subset \bfL^{2}(\Omega).
 \end{gather*}
 These observations are applied as follows.
 There exists $\tau \in \bfH(\Omega,\divergence,\Gamma_{N})$
 such that $\divergence \tau = s_{h}'$, 
 as follows from the Poincar\'e-Friedrichs inequality \eqref{math:poincarefriedrichs:distdiv}
 and the fact that $s_{h}' \in L^{2}(\Omega)$ annihilates $\scrH(\Omega,\Gamma_{D})$. 
 We set $\tau_{h} = \bfI\left[\RT^{r}(\Omega)\right] \tau$ 
 and see 
 \begin{align*}
  \divergence \tau_{h}
  &=
  \divergence \bfI\left[\RT^{r}(\Omega)\right] \tau 
  \\&=
  \bfI\left[\bfQ^{r}(\Omega)\right] \divergence \tau 
  =
  \bfI\left[\bfQ^{r}(\Omega)\right] s_{h}' 
  =
  s_{h}' 
  . 
 \end{align*}
 This completes the proof.
\end{proof}

\subsection{Localized Flux Reconstruction}
\label{subsec:flux:localized}

After this preparation, we approach the solution of the residual flux equation. 
Given a distributional finite element right-hand side $F_{h} \in \bfQ^{r}_{-2}(\Omega_{h},\Gamma_{N})$, 
we let $u_{h} \in \bfQ^{r+1}(\Omega_{h},\Gamma_{D})$ 
be the unique solution of 
\begin{gather*}
 \int_{\Omega} \grad u_{h} \cdot A \grad v_{h}
 =
 F_{h} \left( v_{h} - \scrH_{\Omega,\Gamma_{D}} v_{h} \right)
 ,
 \quad 
 v_{h} \in \bfQ^{r+1}(\Omega_{h},\Gamma_{D})
 .
\end{gather*}
In order to facilitate the equilibrated error estimator 
we seek a solution $\sigma_{h} \in \RT^{r}_{-1}(\Omega_{h})$
to the finite element flux equation 
\begin{gather}
 \label{eq:finiteelementfluxequation}
 - \divergence_{\Omega,\Gamma_{N}} A \sigma_{h} = F_{h} - \scrH_{\Omega,\Gamma_{D}} F_{h}
 .
\end{gather}
Note that \eqref{eq:finiteelementfluxequation} is well-posed
according to Lemma~\ref{lemma:einzigeslemma}.
Hence, this equation can be solved in the sense of least-squares with respect to the
$A$-norm. Moreover, if $F_{h} \in \bfQ^{r}_{-1}(\Omega)$ is a
square-integrable function, then the variable $\sigma_{h}$ may be sought in
$\RT^{r}(\Omega_{h},\Gamma_{N})$. In this sense \eqref{eq:finiteelementfluxequation}, 
the finite element flux equation can be solved with computational costs
comparable to a mixed finite element method. In particular, the approximate
solution $u_{h}$ does not enter the construction.
\\

However, the Galerkin approximation $u_{h}$ contains additional information
that facilitates a localized solution of the flux equation. The solution
constructed in that manner will generally not minimize the $A$-norm,
though.

To begin with, we define the residual 
$r_{h} \in \bfQ^{r}_{-2}(\Omega_{h},\Gamma_{N}) \subseteq H^{-1}(\Omega,\Gamma_{N})$ by
\begin{gather*}
  \label{eq:residualdefinition}
  r_{h}
  :=
  \left( \Id - \scrH_{\Omega,\Gamma_{D}} \right) F_{h} + \divergence_{\Omega,\Gamma_{N}} A \grad u_{h}
  .
\end{gather*}
Our goal is 
to construct a vector field $\varrho_{h} \in \RT^{r}_{-1}(\Omega_{h})$ solving
the finite element residual flux equation 
\begin{gather}
  \label{eq:FE:fluxequation:residual}
  - \divergence_{\Omega,\Gamma_{N}} A \varrho_{h} = r_{h}.
\end{gather}
Again, this system has a proper solution, which in theory can be directly
constructed from the any solution of \eqref{eq:finiteelementfluxequation}.
For the implementation of the localized flux reconstruction, though, 
we construct a solution of \eqref{eq:finiteelementfluxequation} 
with the help of a solution of \eqref{eq:FE:fluxequation:residual}. 
\\

In order to discuss this, we introduce a partition of unity. 
For every non-hanging node $v \in \calN_{h}^I$ 
we let $\psi^{V} \in \bfQ^{1}(\Omega_{h})$
be defined by requiring that $\psi^{V}$ 
takes the value $1$ at the node $v \in \calN_{h}^{I}$ 
and the value $0$ at all other non-hanging nodes in $\calN_{h}^{I}$.
The support $\omega^{V} := \supp \psi^{V}$ is a subdomain of $\Omega$. 
It is evident that the collection of all such $\psi^{V}$
constitutes a partition of unity over $\Omega$. 

We let $\omega^{V}_{h}$ be the \emph{patch} of elements around $v$, 
which forms a partition of $\omega^{V}$.
In addition, we define the local boundary patches
\begin{gather*}
 \gamma_{D}^{V} 
 :=
 \left\{\begin{array}{rl}
         \partial\omega^{V} \cap \Gamma_{D} & \text{ if } V    \in \Gamma_{D},
         \\
         \emptyset                          & \text{ if } V \notin \Gamma_{D},
        \end{array}\right.
 \qquad 
 \gamma_{N}^{V} := \partial\omega^{V} \setminus \overline{\gamma_{D}^{V}}.
\end{gather*}
It is important to note that every local patch $\omega^{V}$
is a contractible Lipschitz domain. 
In particular, $\scrH(\omega^{V},\gamma^{V}_{D})$ 
is either zero 
or spanned by the constant functions over the patch $\omega^{V}$
depending on whether $\gamma^{V}_{D}$ is non-empty or not. 

For every $v \in \calN_{h}^{I}$ being a non-hanging node,
we define the localized residual as the distribution 
$r_{h}^{V} := \psi^{V} r_{h}$.
Note that $r_{h}^{V} \in \bfQ^{r+1}_{-2}(\omega_{h},\gamma_{N}^{V})$.
Moreover,
for any function $1_{V} \in \scrH_{D}(\omega^{V},\gamma^{V}_{D})$
with constant value $1$ we have 
\begin{gather*}
 r^{V}_{h}( 1_{V} ) = r^{V}_{h}( \psi_{h}^{V} ) = 0.
\end{gather*}
This follows by the definition of the residual together with Galerkin
orthogonality.

Consequently, Lemma~\ref{lemma:einzigeslemma} gives the existence of a solution 
$\varrho_{h}^{V} \in \RT_{-1}^{r+1}(\omega^{V})$ 
to the localized problem 
\begin{gather*}
 - \divergence_{\Omega,\Gamma_{N}} \varrho_{h}^{V} = r^{V}_{h}
 .
\end{gather*}
Definitions show that the sum  
\begin{align}
  \varrho_{h}^{L} := \sum_{ v \in \calN_{h}^I }\varrho_{h}^{V}
\end{align}
is a solution to the residual flux equation \eqref{eq:FE:fluxequation:residual}.
A solution to the finite element flux equation
is then given by 
\begin{gather*}
 \sigma_{h}^{L} := \varrho_{h} + \grad u_{h}
 \in
 \RT^{r+1}\left( \Omega_{h}, \Gamma_{N} \right)
 .
\end{gather*}

\subsection{Applications}
\label{subsec:flux:applications}

We assume that $F \in H^{-1}(\Omega,\Gamma_N)$ 
and that $u \in H^{1}(\Omega,\Gamma_{D})$ is the unique solution of 
\begin{gather}
 \label{math:approximatepoisson}
 - \divergence_{\Omega,\Gamma_N} A \grad u = F - \scrH_{\Omega,\Gamma_{D}} F,
 \quad 
 u \perp \scrH(\Omega,\Gamma_{D})
 .
\end{gather}
Under the special assumption that $F \in \bfQ^{r}(\Omega_{h},\Gamma_{N})$
we can immediately instantiate the flux reconstruction (either global or localized)
to obtain 
\begin{gather*}
 \| \grad u - \grad u_{h} \|_{A} \leq \| \grad u_{h} - \sigma_{h} \|_{A}
 .
\end{gather*}
In general, we cannot assume that $F$ is a member of
$\bfQ^{r}_{-2}(\Omega_{h},\Gamma_{N})$. 
Hence the flux reconstruction is performed 
by solving an approximate flux equation 
with an approximate right-hand side $F_{h} \in \bfQ^{r}_{-2}(\Omega,\Gamma_{N})$. 
Given any discrete flux $\sigma_{h} \in
\RT^{r+1}(\Omega_{h},\Gamma_{N})$ solving
\begin{gather}
 \label{math:approximatefluxequation}
 - \divergence_{\Omega,\Gamma_N} A \sigma_{h} = F_{h} - \scrH_{\Omega,\Gamma_{D}} F_{h},
\end{gather}
we can utilize the generalized error estimates of the previous
section.

The discrete flux $\sigma_{h} \in \RT^{r+1}(\Omega_{h},\Gamma_{N})$
can of course be obtained by solving a global finite element system.
Let $\sigma_{h}^{G} \in \RT^{r+1}(\Omega_{h},\Gamma_{N})$ denote 
this flux reconstruction, which is obtained by solving 
\eqref{math:approximatefluxequation} 
with minimal $A$-norm over $\Omega$.
We also write $\varrho_{h}^{G} := \sigma_{h}^{G} - \nabla u_{h} \in
\RT^{r+1}_{-1}(\Omega_{h})$ for the corresponding solution of the residual
flux equation.

The localization
of the flux reconstruction with approximate data requires additional assumptions. 
The reason is that the localized flux reconstruction requires
a local equilibration condition; since $u_{h}$ satisfies the Galerkin
condition with respect to $F$ but generally not with respect to $F_{h}$, it
is not immediately clear how the localized construction can be generalized.

As a solution we impose an additional condition: we require that
\begin{gather}
 \label{math:dataregularity}
 \left( F - F_{h} \right)( v_{h} ) = 0,
 \quad 
 v_{h} \in \bfQ^{1}(\Omega_{h},\Gamma_{D})
 .
\end{gather}
Under that condition we have 
\begin{gather*}
 F_{h}\left( \psi^{V} \right) - \left\langle \grad u_{h}, \grad \psi^{V} \right\rangle_{A}
 =
 F    \left( \psi^{V} \right) - \left\langle \grad u_{h}, \grad \psi^{V} \right\rangle_{A}
\end{gather*}
for all non-hanging nodes $V \in \calN^{I}_{h}$ with $V \notin \Gamma_{D}$.
Consequently, the local problems in the localized flux reconstruction 
are well-posed,
and thus a solution $\varrho_{h} \in \RT^{r+1}_{-1}(\Omega_{h})$
to the \emph{approximate residual flux equation} 
\begin{gather*}
 - \divergence_{\Omega,\Gamma_N} A \varrho_{h}
 =
 \left( \Id - \scrH_{\Omega,\Gamma_{D}} \right) F_{h} + \divergence A \grad u_{h} 
\end{gather*}
is found by adding the local solutions.
As before, a solution to \eqref{math:approximatefluxequation} is 
now given by $\sigma_{h} := \varrho_{h} - \grad u_{h}$.
A reliable error estimates (even in the case $F \neq F_{h}$)
is then given via \eqref{math:reliableerrorestimate}. 
To quantify the efficiency of this error estimate 
we may use \eqref{math:efficiencyofestimate}. 

Finally, 
for our study of adaptive finite element methods later in this article 
we introduce the local error indicators 
\begin{gather}
 \label{math:localerrorindicator}
 \eta_{K}(\varrho_{h})
 :=
 \| \varrho_{h} \|_{\bfL^{2}_{A}(K)}^{2}
 . 
\end{gather}

\subsection{Improved Estimates}
\label{subsec:flux:improvedestimates}
 The aforementioned error estimates and efficiency estimates are of very
 general nature. The additional structure given by the finite element
 setting enables some interesting further results that quantify the
 influence of the error $F - F_{h}$ in the data. These techniques, however,
 utilize additional regularity assumptions:
 \begin{gather}
  \label{math:l2regularityassumption}
  F, F_{h} \in L^{2}(\Omega)
  ,
  \\
  \label{math:orthogonalityassumption}
  F - F_{h} \perp \bfQ_{-1}^{0}(\Omega_{h})
  .
 \end{gather}
 We begin with a lower bound for the error estimator. 
 It is a basic fact that 
 \begin{align*}
  \| \nabla u - \nabla u_{h} \|_{A}
  &=
  \sup_{ \substack{ w \in H^{1}(\Omega,\Gamma_{D}), \\ w \perp \scrH(\Omega,\Gamma_{D}) } }
  \dfrac{ 
   \langle \nabla u - \nabla u_{h}, \nabla w \rangle_{A}
  }{
   \| \nabla w \|_{A}
  }
  . 
 \end{align*}
 Let $\widehat u$ be the solution of the Poisson equation 
 analogous to \eqref{math:approximatepoisson}
 but with the data $F$ being replaced by the approximate data $F_{h}$.
 If $\varrho_{h} \in \RT_{-1}^{r+1}(\Omega_{h})$ solves
 \eqref{math:approximatefluxequation}, then we can use 
 \begin{align*}
  \langle \nabla u - \nabla u_{h}, \nabla w \rangle_{A}
  &=
  \langle \nabla u - \nabla \widehat u, \nabla w \rangle_{A}
  +
  \langle \nabla \widehat u - \nabla u_{h}, \nabla w \rangle_{A}
  \\&=
  \langle F - F_{h}, w \rangle 
  +
  \langle \nabla \widehat u - \nabla u_{h}, \nabla w \rangle_{A}
  \\&=
  \langle F - F_{h}, w \rangle
  +
  \langle \varrho_{h}, \nabla w \rangle_{A}
  \\&=
  \langle F - F_{h}, w - w_{h} \rangle
  +
  \langle \varrho_{h}, \nabla w \rangle_{A}
  , 
 \end{align*}
 where in the last step $w_{h} \in \bfQ^{1}(\Omega,\Gamma_{D})$ can be
 chosen arbitrarily due to condition \eqref{math:dataregularity}. By
 picking $w_{h}$ as the first-order Cl\'ement interpolant of $w$ (see
 \cite{carstensen2009hanging} for a discussion) one now sees that 
 \begin{gather}
  \label{math:improvedestimates:reliability}
  \| \nabla u - \nabla u_{h} \|_{A}
  \leq 
  \| \varrho_{h} \|_{A}
  +
  C_{I} \sum_{ K \in \Omega_{h} } h_{K} \| F - F_{h} \|_{ L^{2}(K) }
  , 
 \end{gather}
 where the constant $C_{I} > 0$ depends only on $C_{h}$. 
 
 On the other hand, we prove a converse upper bound for the error estimator
 specifically for the residual flux reconstruction $\varrho_{h}^{L}$. 
 We have 
 \begin{gather*}
  \left\| \varrho_{h}^{L} \right\|_{\bfL^{2}_{A}(K)}
  \leq
  \sum_{ V \in \calN_{h}^{I}(K) }
  \left\| \varrho_{h}^{V} \right\|_{\bfL^{2}_{A}(K)}
  \leq
  \sum_{ V \in \calN_{h}^{I}(K) }
  \left\| \varrho_{h}^{V} \right\|_{\bfL^{2}_{A}(\omega^{V})}
  . 
 \end{gather*}
 A scaling argument with a constant $C>0$
 depending only on the mesh-constant and the polynomial degree 
 yields that 
 \begin{gather*}
  \left\| \varrho_{h}^{V} \right\|_{\bfL^{2}_{A}(\omega^{V})}
  \leq
  C
  \| \nabla u - \nabla u_{h} \|_{\bfL^{2}_{A}(\omega^{V})}
  +
  C
  \sum_{ K \in \omega^{V}_{h} }
  h_{K} \| F_{h} - F \|_{L^{2}(K)}
  . 
 \end{gather*}
 
 Finally, we address the relation between the flux reconstructions
 $\sigma_{h}^{L}$, the minimum norm solution $\sigma_{h}^{G}$ over
 $\RT^{r+1}(\Omega_{h},\Gamma_{N})$, and the true flux $\nabla u$.
 Since the flux reconstruction $\sigma_{h}^{L}$ can be obtained just as
 much from the distributional divergence of the flux reconstruction $\sigma_{h}^{G}$, a
 scaling argument easily gives 
 \begin{gather}
  \label{math:flux:globalvslocal}
  \left\| \varrho_{h}^{L} \right\|_{\bfL^{2}_{A}(K)}
  +
  \left\| \varrho_{h}^{G} - \varrho_{h}^{L} \right\|_{\bfL^{2}_{A}(K)}
  \leq 
  C
  \sum_{ V \in \calN_{h}^{I}(K) }
  \left\| \varrho_{h}^{G} \right\|_{\bfL^{2}_{A}(\omega^{V})}
  . 
 \end{gather}
 We conclude that the local flux reconstruction will typically 
 not be much worse than the global flux reconstruction in numerical tests. 
 Note that the generic constant $C > 0$ in \eqref{math:flux:globalvslocal}, 
 which depends only the mesh constant and the polynomial, 
 satisfies a computable bound.
 
 The global flux reconstruction is equivalent to the solution 
 of a mixed finite element method. 
 In typical applications $\nabla u$ and $\nabla \widehat u$
 are contained in a higher order Sobolev space such as $\bfH^{s}(\Omega)$
 for a parameter $s \in [0.5,1]$ that depends only on the domain. 
 Consequently, we have 
 \begin{align*}
  \left\| \sigma_{h}^{G} - \nabla u \right\|_{A}
  &\leq 
  \left\| \nabla \widehat u - \nabla u \right\|_{A}
  +
  \left\| \sigma_{h}^{G} - \nabla \widehat u \right\|_{A}
  \\&\leq 
  C 
  \sum_{K \in \Omega_{h}}
  \left( 
   h_{K} \left\| F_{h} - F \right\|_{A}
   +
   h^{s}_{K} \left\| \nabla \widehat u \right\|_{\bfH^{s}(K)}
  \right)
  .
 \end{align*}
 Hence the differences 
 \begin{gather*}
  \theta^{G}_{h} := \sigma^{G}_{h} - \nabla \widehat u,
  \quad 
  \theta^{L}_{h} := \sigma^{L}_{h} - \nabla \widehat u,
 \end{gather*}
 which are members of $\bfX(\Omega,\Gamma_{N},A)$
 as defined in \eqref{math:hodgecomponent}, 
 are controlled by terms of higher order.
 The two vector fields $\theta^{G}_{h}$ and $\theta^{L}_{h}$
 measure how far the respective flux reconstructions 
 differ from the true gradients.

\begin{remark}
 Our analysis of the influence of $F - F_{h}$
 on the overall data is inspired by earlier discussions in the literature,
 in particular the work of Braess and Sch\"oberl \cite{braess2009equilibrated}.
 Their construction of the local problems in the case $F \neq F_{h}$
 is different from ours though (see \cite{braess2007finite}).
\end{remark}

\begin{remark}
 The label \emph{equilibrated error estimator} is motivated by the localized solution of the flux equation
 based on local Neumann problems whose right-hand sides satisfy the equilibrium condition,
 i.e., annihilate the constant functions. 
 In this article we treat error estimators based on solving local Poisson problems in mixed formulations.
 The term \emph{equilibrated error estimator}, however, 
 is shared with another family of error estimators that solve local Poisson problems in elliptic formulation 
 over either local patches or single elements 
 (see \cite{kelly1984self,bank1985some,ainsworth1993unified,morin2003local,ainsworth2007analysis}).
 Even though there exists extensive literature on the latter
 type of error estimators for a variety of different finite element spaces,
 it seems that much less has been published for the former family of
 equilibrated error estimators.

 Equilibrated error estimators are often contrasted to the classical
 residual error estimator: then the latter is called \emph{explicit}
 because it derives estimates directly from bounding the negative norm of
 the residual, which involves mesh-dependent norms on the volume and face
 terms of the residual. Equilibrated residual error estimators, on the
 other hand, are called \emph{implicit} because they bound the negative
 norm of the residual 
 in terms of a flux reconstruction 
 associated with the residual.
\end{remark}

\section{Error Estimation for Quantities of Interest}
\label{sec:hypercircle:qoi}

In many applications we are primarily interested in determining a specific
\emph{quantity of interest}. For the sake of simplicity, we assume 
these to be linear functionals of the solution. Goal-oriented a posteriori
error estimation aims at sharp error estimates in and adaptivity optimized
towards the approximation of the quantity of interest. All but the most
simplistic approaches towards goal-oriented a posteriori error estimation 
require the computation of Galerkin solutions of both the original primal and a dual
problem of the same kind.
In this section we review some techniques proposed 
in the literature \cite{becker2001optimal,mozolevski2015goal,feischl2016abstract}
and suggest a new heuristic approximation for the error 
in the quantity of interest.

\subsection{Basic Theory}
\label{subsec:hypercircle:qoi:basic}

We reconsider the Poisson problem from Section~\ref{sec:hypercircle:poisson},
where $A \in L^{\infty}(\Omega)^{n\times n}$ is an admissible metric tensor 
and $F \in H^{\inv}(\Omega,\Gamma_{N})$ is the right-hand side.
Additionally, we let $J \in H^{\inv}(\Omega,\Gamma_{N})$ be a functional.
We are interest in the value $J(u)$, 
where $u \in H^{1}(\Omega,\Gamma_{D})$ solves
the following Poisson problem: 
\begin{gather}
 \label{math:poisson:primal}
 - \divergence_{\Omega,\Gamma_{N}} A \grad u = F - \scrH_{\Omega,\Gamma_{D}} F,
 \qquad
 u \perp \scrH(\Omega,\Gamma_{D}).
\end{gather}
The functional $J$ is also called \emph{goal functional} in the literature.
Now suppose that $u_{h} \in H^{1}(\Omega,\Gamma_{D})$ is any computable approximation of $u$.
We are interested in estimating the error $J(u) - J(u_{h})$ in the goal functional
without using the generally unknown true solution of the Poisson problem.
\\

A central concept in the error estimation of linear functionals is the \emph{dual problem}.
This constitutes in determining the \emph{dual quantity of interest} $F(z)$, 
where $z \in H^{1}(\Omega,\Gamma_{D})$ solves the Poisson problem 
\begin{gather}
 \label{math:poisson:dual}
 - \divergence_{\Omega,\Gamma_{N}} A \grad z = J - \scrH_{\Omega,\Gamma_{D}} J,
 \qquad
 z \perp \scrH(\Omega,\Gamma_{D})
 . 
\end{gather}
Accordingly, we call the problem of determining the \emph{primal quantity of interest} $J(u)$
the \emph{primal problem}.
The well-posedness of \eqref{math:poisson:primal} and
\eqref{math:poisson:dual} is clear by our discussion in Section~\ref{sec:background}.
$F$ is the \emph{primal right-hand side} and the \emph{dual goal functional},
while $J$ is the \emph{dual right-hand side} and the \emph{primal goal functional}. 
We call $u$ and $z$ the \emph{primal solution} and the \emph{dual solution}, respectively.
The elementary identity 
\begin{gather}
 \label{math:qoi:identity}
 J(u) = \langle \grad u, \grad z \rangle_{A} = F(z)
\end{gather}
states that the primal and the dual quantities of interest coincide. 
%
\\

In the sequel we apply this idea to finite element approximations. 
We fix a cubical partition $\Omega_{h}$ of the domain
$\Omega$ such that a subset of the faces is a partition of $\Gamma_{D}$.
Moreover we let $r \in \bbN$ be a fixed non-negative integer that
denotes the polynomial degree. 
We assume to be given 
both an approximation $u_{h} \in H^{1}(\Omega,\Gamma_{D})$ of the primal solution 
and an approximation $z_{h} \in H^{1}(\Omega,\Gamma_{D})$ of the dual solution.
We specifically require these to be Galerkin solutions 
in the sense that 
$u_{h}, z_{h} \in \bfQ^{r}(\Omega,\Gamma_{D})$ satisfy 
\begin{subequations}
\label{math:galerkinform}
\begin{gather}
 \label{math:galerkinform:primal}
 \langle \grad u_{h}, \grad v_{h} \rangle_{A}
 =
 F\left( v_{h} - \scrH_{\Omega,\Gamma_{D}} v_{h} \right),
 \quad 
 v_{h} \in \bfQ^{r}(\Omega,\Gamma_{D})
 ,
 \\
 \label{math:galerkinform:dual}
 \langle \grad z_{h}, \grad v_{h} \rangle_{A}
 =
 J\left( v_{h} - \scrH_{\Omega,\Gamma_{D}} v_{h} \right),
 \quad 
 v_{h} \in \bfQ^{r}(\Omega,\Gamma_{D})
 .
\end{gather}
\end{subequations}
To ensure uniqueness we may enforce the usual orthogonality conditions 
$u_{h} \perp \scrH(\Omega,\Gamma_{D})$ and $z_{h} \perp \scrH(\Omega,\Gamma_{D})$, 
but this will not be central in the sequel.
A simple but important consequence of \eqref{math:galerkinform}
is Galerkin orthogonality:
\begin{subequations}
\label{math:galerkinorthogonality}
\begin{gather}
 \label{math:galerkinorthogonality:primal}
 \langle \grad( u - u_{h} ), \grad v_{h} \rangle_{A} = 0,
 \quad 
 v_{h} \in \bfQ^{r}(\Omega,\Gamma_{D}),
 \\
 \label{math:galerkinorthogonality:dual}
 \langle \grad( z - z_{h} ), \grad v_{h} \rangle_{A} = 0,
 \quad 
 v_{h} \in \bfQ^{r}(\Omega,\Gamma_{D}).
\end{gather}
\end{subequations}
We call $u_{h}$ and $z_{h}$ the \emph{primal} and the \emph{dual Galerkin solution}, respectively.
Using definitions and Galerkin orthogonality, we find 
\begin{align}
 \label{math:qoi:erroridentity}
 \begin{split}
 J( u - u_{h} ) 
 =
 \langle \grad u - \grad u_{h}, \grad z - \grad z_{h} \rangle_{A}
 = 
 F( z - z_{h} )
 .
 \end{split}
\end{align}
In other words, the error in the quantity of interest 
equals the product of the primal and the dual gradient errors. 
We derive various goal-oriented error estimators 
by computing an upper bound for the error product 
$\langle \grad u - \grad u_{h}, \grad z - \grad z_{h} \rangle_{A}$,
which we henceforth call the \emph{error of interest}. 

\begin{remark}
 \label{rem:qoi:energyerror}
 The energy error is a special quantity of interest.
 If $J = F$ and $u_h = z_h$, 
 then the primal and the dual quantities of interest \eqref{math:qoi:identity} 
 coincide with the energy norm of the error,
 and the error of interest \eqref{math:qoi:erroridentity} 
 is then the energy error of the Galerkin solution. 
\end{remark}


\begin{remark}
 An additional source of error has not been discussed yet:
 in applications, the Galerkin solutions are 
 not computed exactly but 
 only up to a certain numerical accuracy. 
 Consequently, 
 a posteriori error estimates must in principle take into account 
 the deviation from Galerkin orthogonality (see \cite{arioli2013interplay}). 
 %
 This is not within the scope of this article. 
\end{remark}

\subsection{Review of Flux Reconstruction}
\label{subsec:hypercircle:qoi:review}

We briefly discuss flux reconstructions for the primal and the dual problem and settle some notation. 
We define the \emph{primal residual} $r_{h} \in H^{-1}(\Omega,\Gamma_N)$
and the \emph{dual residual} $s_{h} \in H^{-1}(\Omega,\Gamma_N)$
by 
\begin{subequations}
\label{math:residuals}
\begin{gather}
 \label{math:residuals:primal}
 r_{h} := F - \scrH_{\Omega,\Gamma_{D}} F + \divergence_{\Omega,\Gamma_{N}} A \grad u_{h}
 ,
 \\
 \label{math:residuals:dual}
 s_{h} := J - \scrH_{\Omega,\Gamma_{D}} J + \divergence_{\Omega,\Gamma_{N}} A \grad z_{h}
 .
\end{gather}
\end{subequations}
We introduce the residual flux equations 
\begin{gather}
 \label{math:residual:flux:equation}
 - \divergence_{\Omega,\Gamma_{N}} A \varrho_{h} = r_{h},
 \quad 
 - \divergence_{\Omega,\Gamma_{N}} A \varpi_{h} = s_{h}.
\end{gather}
The gradient error vector fields of the primal and the dual problem, 
\begin{gather}
 \label{math:optimalresidualfluxes}
 \varrho_{h}^{\rm opt} := \grad u - \grad u_{h},
 \quad 
  \varpi_{h}^{\rm opt} := \grad z - \grad z_{h},
\end{gather}
solve the residual flux equations. 
They are optimal solutions in the sense that their $\bfL^{2}$-norm
are minimal among the respective solution sets. 
Hence 
\begin{gather}
 \label{math:optimalerroridentity}
 J( u - u_{h} ) = \left\langle \varrho_{h}^{\rm opt}, \varpi_{h}^{\rm opt} \right\rangle_{A} = F( z - z_{h} )
 .
\end{gather}
This identity expresses the error of interest 
as the product of the optimal residual flux reconstructions.
Of course, this result is practically inaccessible:
if the minimum norm residual flux reconstructions
$\varrho_{h}^{\rm opt}$ and $\varpi_{h}^{\rm opt}$ were known, 
then we could easily recover the true gradients $\grad u$ and $\grad z$
of the primal and the dual solution. 

In many applications, however, 
it is possible to compute finite element solutions 
$\varrho_{h} \in \RT^{r}_{-1}(\Omega)$ and $\varpi_{h} \in \RT^{r}_{-1}(\Omega)$ 
to the residual flux equations \eqref{math:residual:flux:equation}
that are close to the optimal solutions 
$\varrho_{h}^{\rm opt}$ and $\varpi_{h}^{\rm opt}$. 
We then have the Hodge-Helmholtz decompositions 
\begin{gather*}
 \varrho_{h} = \varrho_{h}^{\rm opt} + \theta_{h},
 \quad 
 \varpi_{h} = \varpi_{h}^{\rm opt} + \zeta_{h} 
\end{gather*}
for uniquely determined $\theta_{h}, \zeta_{h} \in \bfX(\Omega,\Gamma_{N},A)$.
Accordingly, 
we find by the $A$-orthogonality of the Hodge-Helmholtz decomposition that 
\begin{gather}
 \label{math:actualproduct} 
 \langle \varrho_{h}, \varpi_{h} \rangle_{A}
 =
 \left\langle \varrho_{h}^{\rm opt}, \varpi_{h}^{\rm opt} \right\rangle_{A}
 +
 \langle \theta_{h}, \zeta_{h} \rangle_{A}.
\end{gather}
Lastly, we recall that the flux reconstructions  
\begin{gather*}
 \sigma_{h} = \varrho_{h} + \grad u_{h},
 \quad 
 \tau  _{h} = \varpi _{h} + \grad z_{h} 
\end{gather*}
solve the primal and the dual flux equations
\begin{gather}
 \label{math:primalanddualfluxequations}
 - \divergence_{\Omega,\Gamma_{N}} A \sigma_{h} = F - \scrH_{\Omega,\Gamma_{D}} F,
 \quad 
 - \divergence_{\Omega,\Gamma_{N}} A \tau  _{h} = J - \scrH_{\Omega,\Gamma_{D}} J.
\end{gather}

\subsection{Heuristic Error Computation}
\label{subsec:hypercircle:qoi:heuristic}

We first recall some constant-free reliable error estimates that make use
of a (global) Cauchy-Schwarz inequality. The most immediate approach is to
trace back the error of interest to the error in the primal problem. Here
one uses 
\begin{align}
 \label{math:globalqoiestimate:naive}
 \begin{split}
  J( u - u_{h} ) 
  &= 
  \left\langle \nabla u - \nabla u_{h}, \nabla z \right\rangle_{A} 
  \\&\leq 
  \| \nabla u - \nabla u_{h} \|_{A} \cdot \| \nabla z \|_{A} 
  \leq 
  \| \varrho_{h} \|_{A} \cdot \| \tau_{h} \|_{A}
 \end{split}
\end{align}
A different variant uses the Galerkin orthogonality 
in the primal problem. We have 
\begin{align}
 \label{math:globalqoiestimate:better}
 \begin{split}
  J( u - u_{h} ) 
  &= 
  \left\langle \nabla u - \nabla u_{h}, \nabla z - \nabla z_{h} \right\rangle_{A} 
  \\&\leq 
  \| \nabla u - \nabla u_{h} \|_{A} \cdot \| \nabla z - \nabla z_{h} \|_{A} 
  \leq 
  \| \varrho_{h} \|_{A} \cdot \| \varpi_{h} \|_{A}
 \end{split}
\end{align}
Both estimates are reliable and constant free.
In applications we expect the error estimate $\| \varrho_{h} \|_{A} \cdot \| \varpi_{h} \|_{A}$
to converge with twice the rate of the error estimate $\| \varrho_{h} \|_{A} \cdot \| \tau_{h} \|_{A}$,
which indicates that error estimates for the primal problem alone 
lead to suboptimal error estimates. 

\begin{remark}
 The error estimates \eqref{math:globalqoiestimate:naive} and \eqref{math:globalqoiestimate:better}
 have been the starting point for several marking strategies in finite element methods. 
 Feischl, Praetorius, and van der Zee \cite{feischl2016abstract} 
 have investigated strategies of how to combine the information 
 of local error estimators for the primal and the dual problem 
 in order to drive goal-oriented adaptivity. 
\end{remark}

The most obvious problem with estimators based on the global Cauchy-Schwarz inequality 
is the overestimation of the true error 
since cancellation effects inside the integral $\langle \nabla u, \nabla z \rangle_{A}$
are not taken into account: if $\nabla u$ and $\nabla z$ are nearly $A$-orthogonal
to each other, then the overestimation will be substantial. 

This has motivated a variety of error estimators 
that approximate the error $J( u - u_{h} )$ by a computable integral. 
Consequently, these techniques give \emph{error approximations}
rather than \emph{error estimates}. 
On the one hand, 
these techniques are generally heuristic 
and may suffer from massive underestimation of the error \cite{nochetto2008safeguarded}.
On the other hand, 
the incorporation of cancellation effects 
achieves very accurate approximations for the true 
error of interest in applications.

We review some computable approximations 
for the error of interest $J( u - u_{h} )$. 
We first see 
\begin{align*}
 J( u - u_{h} )
 =
 \langle \nabla u - \nabla u_{h}, \nabla z \rangle_{A} 
 =
 \langle \sigma_{h} - \nabla u_{h}, \nabla \tau_{h} \rangle_{A} 
 -
 \langle \theta_{h}, \zeta_{h} \rangle_{A} 
 .
\end{align*}
Neglecting the term $\langle \theta_{h}, \zeta_{h} \rangle_{A}$, 
which is generally not computable we come to the error approximation 
\begin{gather}
 \label{math:qoi:approx:mp}
 J( u - u_{h} )
 \approx 
 \eta^{\varrho\tau}
 :=
 \langle \sigma_{h} - \nabla u_{h}, \tau_{h} \rangle_{A} 
 =
 \langle \varrho_{h}, \tau_{h} \rangle_{A} 
\end{gather}
and the local error indicators 
\begin{gather}
 \label{math:qoi:approx:mp:local}
 \eta^{\varrho\tau}_{K}
 := 
 \int_{K} \varrho_{h} \cdot A \tau_{h} \;\dif x, 
 \quad 
 K \in \Omega_{h}
 .
\end{gather}
It is intuitive that this error estimate can be improved 
by using the Galerkin orthogonality of the primal problem.
We have 
\begin{align*}
 J( u - u_{h} )
 &=
 \langle \nabla u - \nabla u_{h}, \nabla z  - \nabla z_{h} \rangle_{A} 
 \\&=
 \langle \sigma_{h} - \nabla u_{h}, \tau_{h} - \nabla z_{h} \rangle_{A} 
 -
 \langle \theta_{h}, \zeta_{h} \rangle_{A} 
 \\&=
 \langle \varrho_{h}, \varpi_{h} \rangle_{A} 
 -
 \langle \theta_{h}, \zeta_{h} \rangle_{A} 
 .
\end{align*}
Again neglecting the term $\langle \theta_{h}, \zeta_{h} \rangle_{A} $,
we consider the approximation 
\begin{gather}
 \label{math:qoi:approx:lm}
 J( u - u_{h} )
 \approx 
 \eta^{\varrho\varpi}
 :=
 \langle \varrho_{h}, \varpi_{h} \rangle_{A} 
 . 
\end{gather}
The corresponding local error indicators are 
\begin{gather}
 \label{math:qoi:approx:lm:local}
 \eta^{\varrho\varpi}_{K}
 :=
 \int_{K} \varrho_{h} \cdot A \varpi_{h} \;\dif x,
 \quad 
 K \in \Omega_{h}
 .
\end{gather}

\begin{remark}
 The error approximations \eqref{math:qoi:approx:mp}
 and \eqref{math:qoi:approx:lm} are, of course, the same, 
 but we state these different formulas because 
 they inspire the different local error indicators 
 \eqref{math:qoi:approx:mp:local} and \eqref{math:qoi:approx:lm:local}, 
 respectively. 
 In fact, 
 the error indicators \eqref{math:qoi:approx:mp:local}
 have been proposed by Mozolevski and Prudhomme \cite{mozolevski2015goal},
 who have investigated goal-oriented equilibrated error estimation 
 for a variety of discontinuous Galerkin finite element methods.
 They focus, however, on the special case of full elliptic regularity,
 where they numerically observed optimal adaptive convergence.
 Curiously, the local indicators \eqref{math:qoi:approx:lm:local}
 have apparently not been investigated before in the literature. 
 
 The size of the error term 
 $\langle \theta, \zeta \rangle_{A} = \langle \sigma_{h} - \nabla u_{h}, \tau_{h} - \nabla z \rangle_{A}$
 is decisive for the approximation quality of $\eta^{\varrho\tau}$ and $\eta^{\varrho\varpi}$. 
 When the flux reconstructions $\sigma_{h}$ and $\tau_{h}$ 
 are obtained by a mixed finite element method,
 then the product $\langle \theta, \zeta \rangle_{A}$ 
 can be controlled by convergence estimates for the mixed finite element method.
 Due to Inequality~\eqref{math:flux:globalvslocal}, 
 this is only slightly worse by a generic constant when 
 the localized flux reconstruction is used instead. 
 Since the polynomial degree (or the local mesh resolution) 
 in the localized flux reconstruction can be increased 
 with only constant factor in the computational effort, 
 we expect the error product $\langle \theta, \zeta \rangle_{A}$
 to be negligible in applications where $\nabla u$ and $\nabla z$
 feature sufficiently high regularity. 
 This gives reasonable hope that 
 $\eta^{\varrho\tau}$ and $\eta^{\varrho\varpi}$
 approximate the true error in many applications.
\end{remark}

The preceding local indicators are based on the approximation $\nabla z \approx \tau_{h}$,
which holds true up to $\bfX(\Omega,\Gamma_{N},A)$, i.e., 
the error of this approximation is orthogonal to gradients. 
Another possibility is to approximate $\nabla z$ by the gradient 
$\nabla z_{h}^{\ast}$ of an approximation $z_{h}^{\ast}$
of the dual solution $z$, i.e., $\nabla z \approx \nabla z_{h}^{\ast}$. 
Due to Galerkin orthogonality we should not choose any 
$z_{h}^{\ast} \in \bfQ^{r}(\Omega_{h},\Gamma_{D})$
since that would give a useless trivial approximation. 
A practically relevant method is constructing $z_{h}^{\ast}$
from $z_{h}$ in a higher order space on a coarser mesh (see \cite{richter2006}). 

With this background in mind, we observe that 
\begin{align*}
 J( u - u_{h} )
 &=
 \langle \nabla u - \nabla u_{h}, \nabla z \rangle_{A} 
 \\&=
 \langle \nabla u - \nabla u_{h}, \nabla z_{h}^{\ast} \rangle_{A} 
 -
 \langle \nabla u - \nabla u_{h}, \nabla z_{h}^{\ast} - \nabla z \rangle_{A} 
 \\&=
 \langle \sigma_{h} - \nabla u_{h}, \nabla z_{h}^{\ast} \rangle_{A} 
 -
 \langle \nabla u - \nabla u_{h}, \nabla z_{h}^{\ast} - \nabla z \rangle_{A} 
\end{align*}
If we assume that the product $\langle \nabla u - \nabla u_{h}, \nabla z_{h}^{\ast} - \nabla z \rangle_{A}$
is negligible, then this suggests the error approximation 
\begin{gather}
 \label{math:qoi:approx:dwr:falsch}
 J( u - u_{h} )
 \approx 
 \eta^{\rm I, \ast}
 :=
 \langle \sigma_{h} - \nabla u_{h}, \nabla z_{h}^{\ast} \rangle_{A} 
\end{gather}
and the corresponding error indicators 
\begin{gather}
 \label{math:qoi:approx:dwr:falsch:local}
 \eta^{\rm I, \ast}_{K}
 :=
 \int_{K} \varrho_{h} \cdot A \nabla z^{\ast}_{h} \;\dif x,
 \quad 
 K \in \Omega_{h}
 . 
\end{gather}
Again, the primal Galerkin orthogonality 
can be used for what promises to be an improvement. 
We then easily find in a manner similar as above that 
\begin{align*}
 J( u - u_{h} )
 =
 \langle \sigma_{h} - \nabla u_{h}, \nabla z_{h}^{\ast} - \nabla z_{h} \rangle_{A} 
 -
 \langle \nabla u - \nabla u_{h}, \nabla z_{h}^{\ast} - \nabla z \rangle_{A} 
 . 
\end{align*}
A promising error approximation is given by 
\begin{gather}
 \label{math:qoi:approx:dwr:richtig}
 J( u - u_{h} )
 \approx 
 \eta^{\rm II, \ast}
 :=
 \langle \sigma_{h} - \nabla u_{h}, \nabla z_{h}^{\ast} - \nabla z_{h} \rangle_{A} 
\end{gather}
with associated local error indicators 
\begin{gather}
 \label{math:qoi:approx:dwr:richtig:local}
 \eta^{\rm II, \ast}_{K}
 :=
 \int_{K} \varrho_{h} \cdot A \left( \nabla z^{\ast}_{h} - \nabla z_{h} \right) \;\dif x,
 \quad 
 K \in \Omega_{h}
 . 
\end{gather}

\begin{remark}
 \label{remark:dwr:vergleich}
 The error approximations $\eta^{\rm I,\ast}$ and $\eta^{\rm II,\ast}$
 and the associated error indicators $\eta^{\rm I,\ast}_{K}$ and $\eta^{\rm II,\ast}_{K}$
 are inspired by dual weighted residual methods \cite{becker2001optimal}.
 As a motivation for this class of goal-oriented error estimations 
 we first assume for the sake of simplicity 
 that $A$ is piecewise constant and $F \in L^{2}(\Omega)$. 
 We then observe that 
 \begin{align*}
  &
  \langle \nabla u - \nabla u_{h}, \nabla z^{\ast}_{h} - \nabla z_{h} \rangle_{A}
  \\
  &\qquad 
  =
  \sum_{K \in \Omega_{h}}
  \int_{K} 
  F\left( z^{\ast}_{h} - z_{h} \right)
  \;\dif x
  -
  \oint_{\partial K} \trace_{\partial K} \left( z^{\ast}_{h} - z_{h} \right) 
  \cdot 
  \trace_{N,K} A \nabla u_{h} \;\dif s.
 \end{align*}
 This has motivated the error approximation $\eta^{\rm DWR,\ast}$
 and the local error indicators $\eta^{\rm DWR,\ast}_{K}$
 of the dual weighted residual method,
 defined by 
 \begin{gather}
  \label{math:qoi:approx:dwr:rannacher}
  \eta^{\rm DWR,\ast}
  :=
  \sum_{K \in \Omega_{h}} \eta_{K}^{\rm DWR,\ast}
  \\
  \label{math:qoi:approx:dwr:rannacher:local}
  \eta^{\rm DWR,\ast}_{K}
  :=
  \int_{K} 
  F\left( z^{\ast}_{h} - z_{h} \right)
  \;\dif x
  -
  \oint_{\partial K} \trace_{\partial K} \left( z^{\ast}_{h} - z_{h} \right) 
  \cdot 
  \trace_{N,K} A \nabla u_{h} \;\dif s
  .
 \end{gather}
 Our error indicators $\eta_K^{\rm II,\ast}$ mimic the
 construction of the dual weighted residual error indicators $\eta_{K}^{\rm DWR,\ast}$
 but are not identical to them.  
 The error approximations $\eta^{\rm DWR,\ast}$ and $\eta^{\rm II,\ast}$, 
 however, are identical.
 
 Due to Galerkin orthogonality, the dual finite element approximation $z_{h}$
 can be removed from the definition of $\eta^{\rm DWR,\ast}$
 without changing the value;
 it makes a difference, however, whether the term $z_{h}$ appears in the
 local error indicators $\eta_K^{\rm DWR,\ast}$ or not. 
 It has long been known that the omission of $z_{h}$ 
 will generally lead to suboptimal adaptive marking strategies.
 This is analogous to our earlier observation 
 regarding the global Cauchy-Schwarz inequality. 
 In practice, we expect the indicators $\eta^{\rm I,\ast}_{K}$
 to perform similar to $\eta^{\varrho\tau}$
 and notably worse than $\eta^{\rm II,\ast}$.
\end{remark}

\begin{remark}
 There are several liberties in the exact definition of the local error indicators 
 but typically these have little effect on the overall performance in practice. 
 We have stated the error indicators as cellwise integrals
 as recommended by Becker and Rannacher (see Equation (3.19) of \cite{becker2001optimal})
 for the dual weighted residual method,
 but it also common to apply an additional Cauchy-Schwarz inequality 
 to the local integrals (see Equation (3.18) in the same reference).
 Furthermore, the dual Galerkin approximation $z_{h}$ 
 in the definition of our error indicators 
 can in practice be replaced by any other ``reasonable'' approximation 
 of $z$ in the finite element space $\bfQ^{r}(\Omega,\Gamma_{D})$. 
 Lastly, regarding specifically the error indicators 
 $\eta^{\rm II,\ast}_{K}$ and their relatives, 
 we mention that there are many possible ways 
 to pick an approximation $z_{h}^{\ast}$,
 see also Section~5 of \cite{becker2001optimal}.
 We utilize a higher order interpolation on a coarser mesh
 for our computational experiments in the next section.
\end{remark}

\section{Numerical Experiments}
\label{sec:experiments}

In this section we present a number of numerical experiments that document
properties of our proposed local patch-wise flux reconstruction, the global
equilibrated error estimator, and our heuristic goal-oriented error
estimators.
All numerical experiments have been computed with the help of the finite element library Deal.II
\cite{dealII85}. Solutions of linear problems (on global meshes and local
patches) have been computed with a direct solver \cite{suitesparse4}.

First, we demonstrate the efficiency of equilibrated error estimators when
the flux reconstruction is conducted locally (as discussed in Section~\ref{sec:flux})
or globally (akin to mixed finite element methods). We successively
document optimal convergence of an adaptive finite element method driven by
equilibrated error estimators.
Furthermore, we assess the performance of the error approximations and
indicators proposed in Section~\ref{sec:hypercircle:qoi}; our numerical
experiments document the efficiency under global uniform refinement 
and as drivers in goal-oriented adaptive finite element methods.



\subsection{Test Cases}
\label{subsec:experiments-testcases}


Our numerical experiments have been carried out 
for two prototypical test cases. 
We consider the model Poisson equation \eqref{math:poisson} 
over two different domains: 
the unit square $\Omega$ and the slit domain $\Omega_{S}$.
\begin{gather}
 \label{math:domains}
 \Omega := (0,1)^{2},
 \quad
 \Omega_{S} := (-1,1)^{2} \setminus (0,1] \times \{0\}.
\end{gather}
In the \emph{first test case} 
we study the Poisson equation over the unit square $\Omega$ 
with the manufactured solution 
\begin{align}
  \label{eq:manufactured}
  u(x,y) &=
  \text{exp}\left(-100(x-\niceonehalf)^2-100(y-117/1000)^2\right)
\end{align}
and essential boundary conditions, chosen accordingly. 
In the \emph{second test case}
we solve the Poisson equation $-\Laplace u = 1$ over the slit domain $\Omega_{S}$
with homogeneous Dirichlet boundary conditions. 
For our numerical experiments we have computed a reference solution 
on a very fine mesh. 
Qualitative pictures of the two test solutions 
are given in Figure~\ref{fig:domain}. 
We note that the solution of the first test case is contained in $H^{2}(\Omega)$
whereas the solution of the second test case is contained in $H^{\frac{3}{2}}(\Omega_{S})$.

In the sequel, the initial meshes for the primal finite element methods
over $\Omega_{}$ and $\Omega_{S}$ 
are cubical meshes of resolution $16 \times 16$ in the first-order case;
we use a $15 \times 15$ mesh in the second-order case
so that the resulting finite element space has the same dimension. 

%

\subsection{Efficiency of local flux reconstruction}
\label{subsec:experiments-flux}

\begin{figure}
  \centering
  \subfloat[]{\includegraphics[height=4cm]{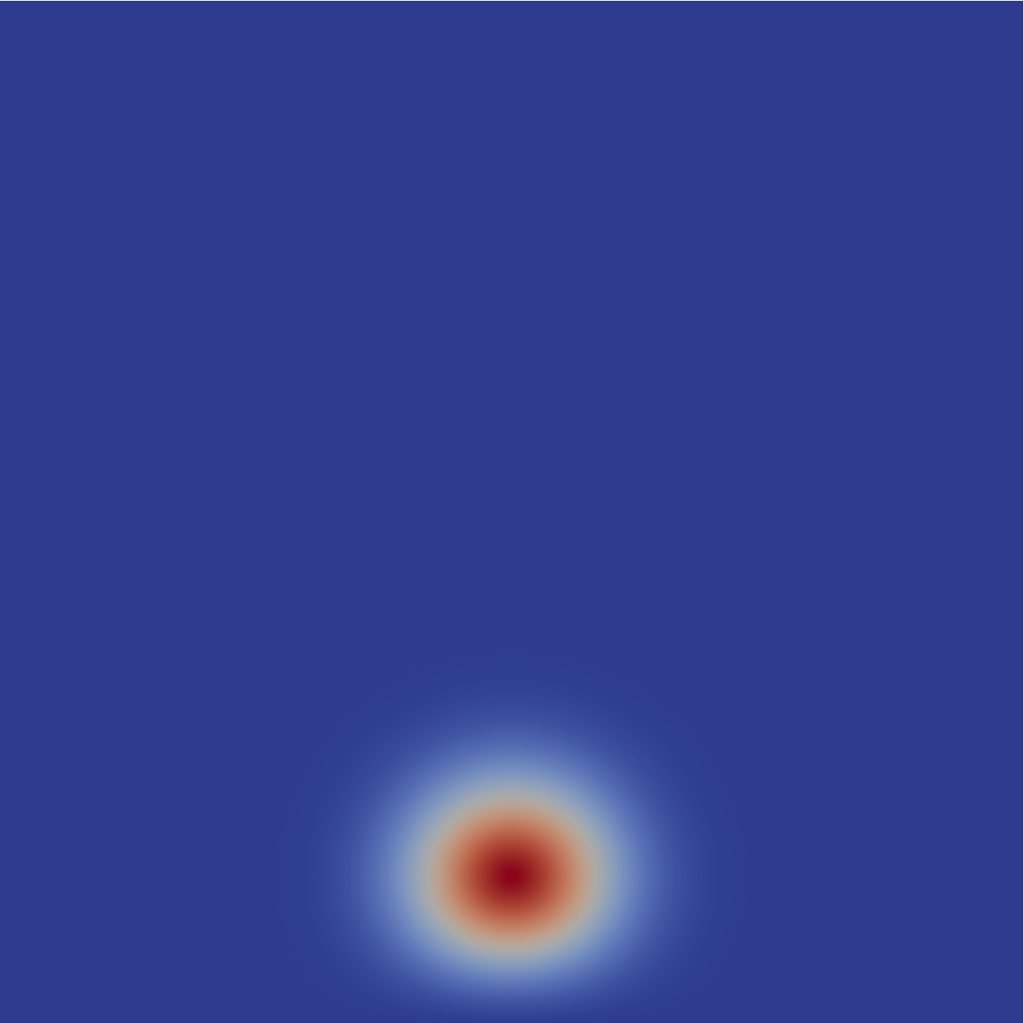}}
  \hspace{3em}
  \subfloat[]{\includegraphics[height=4cm]{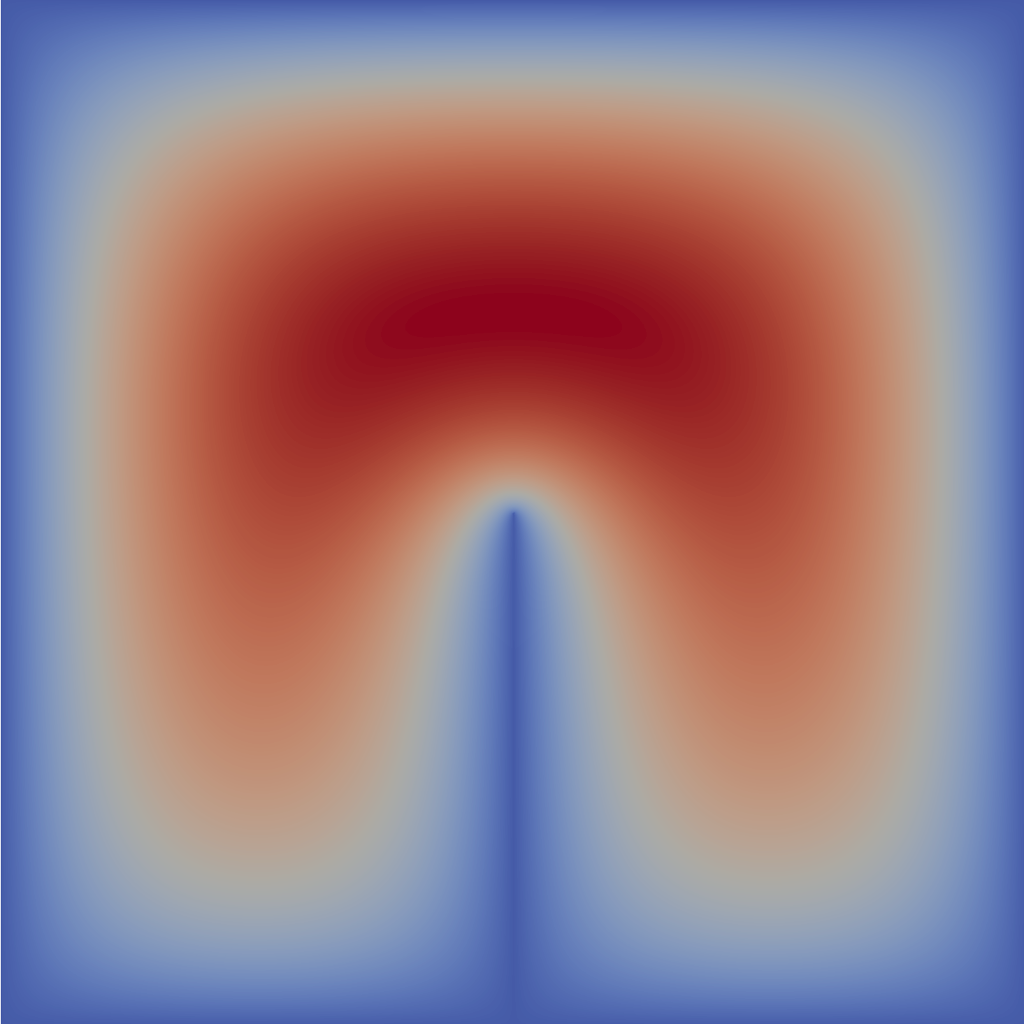}}
  \caption{
    The two considered test cases. 
    Picture (a) shows the manufactured solution
    \eqref{eq:manufactured} on the unit square, 
    Picture (b) shows the solution on the
    slit domain with constant unit right hand side.
  }
  \label{fig:domain}
  \vspace{1em}
  
  \footnotesize
  \subfloat[Manufactured solution, first and second order]{
    \begin{tabular} {r r c c c c c c}
      \toprule
      & & & & \multicolumn{2}{c}{Mixed method}
      & \multicolumn{2}{c}{Flux reconstruction}\\
      \cmidrule(lr){5-6}
      \cmidrule(lr){7-8}
      & \#Dofs
      & \multicolumn{2}{c}{$\|\nabla u_h - \nabla u\|_A^2$}
      & $\|\nabla u_h - \sigma_h\|_A^2$ & $I_{\text{eff},f}$
      & $\|\nabla u_h - \sigma_h\|_A^2$ & $I_{\text{eff},f}$
      \\[0.5em]
      1 &    289 & 2.84e-4 & 0.97 & 2.62e-4 & 0.921 & 2.97e-4 & 1.047 \\
      2 &   1089 & 7.35e-5 & 1.95 & 7.20e-5 & 0.980 & 7.54e-5 & 1.026 \\
      3 &   4225 & 1.85e-5 & 1.99 & 1.85e-5 & 0.996 & 1.87e-5 & 1.009 \\
      4 &  16641 & 4.63e-6 & 2.00 & 4.64e-6 & 1.003 & 4.66e-6 & 1.006 \\
      5 &  66049 & 1.15e-6 & 2.02 & 1.16e-6 & 1.016 & 1.16e-6 & 1.017 \\
      \\[0.5em]
      1 &    289 & 2.11e-4 & 1.81 & 1.75e-4 & 0.830 & 1.85e-4 & 0.880 \\
      2 &   1089 & 1.02e-5 & 4.37 & 9.67e-6 & 0.953 & 1.03e-5 & 1.017 \\
      3 &   4225 & 6.69e-7 & 3.92 & 6.61e-7 & 0.988 & 6.76e-7 & 1.010 \\
      4 &  16641 & 4.24e-8 & 3.98 & 4.23e-8 & 0.997 & 4.26e-8 & 1.003 \\
      5 &  66049 & 2.66e-9 & 3.99 & 2.66e-9 & 0.999 & 2.66e-9 & 1.001 \\
      \bottomrule
    \end{tabular}}

  \subfloat[Slit domain, first and second order]{
    \begin{tabular} {r r c c c c c c}
      \toprule
      & & & & \multicolumn{2}{c}{Mixed method}
      & \multicolumn{2}{c}{Flux reconstruction}\\
      \cmidrule(lr){5-6}
      \cmidrule(lr){7-8}
      & \#Dofs
      & \multicolumn{2}{c}{$\|\nabla u_h - \nabla u\|_A^2$}
      & $\|\nabla u_h - \sigma_h\|_A^2$ & $I_{\text{eff},f}$
      & $\|\nabla u_h - \sigma_h\|_A^2$ & $I_{\text{eff},f}$
      \\[0.5em]
      1 &   1105 & 2.46e-4 & 1.34 & 3.09e-4 & 1.255 & 3.99e-4 & 1.617 \\
      2 &   4257 & 1.06e-4 & 1.22 & 1.39e-4 & 1.309 & 1.83e-4 & 1.725 \\
      3 &  16705 & 4.76e-5 & 1.16 & 6.55e-5 & 1.376 & 8.73e-5 & 1.835 \\
      4 &  66177 & 2.14e-5 & 1.16 & 3.18e-5 & 1.488 & 4.26e-5 & 1.997 \\
      5 & 263425 & 8.96e-6 & 1.25 & 1.56e-5 & 1.745 & 2.11e-5 & 2.351
      \\[0.5em]
      1 &   1105 & 1.19e-4 & 1.03 & 1.82e-4 & 1.529 & 2.75e-4 & 2.314 \\
      2 &   4257 & 5.82e-5 & 1.03 & 9.05e-5 & 1.556 & 1.37e-4 & 2.349 \\
      3 &  16705 & 2.81e-5 & 1.05 & 4.52e-5 & 1.609 & 6.82e-5 & 2.425 \\
      4 &  66177 & 1.31e-5 & 1.10 & 2.26e-5 & 1.723 & 3.41e-5 & 2.597 \\
      5 & 263425 & 5.62e-6 & 1.22 & 1.13e-5 & 2.011 & 1.70e-5 & 3.031 \\
      \bottomrule
    \end{tabular}}
    \vspace{1em}
  \captionof{table}{Parameter study demonstrating the efficiency of the equilibrated
    error estimator for the two considered test cases of a manufactured
    solution (a) and a solution over the slit domain (b).}
  \label{table:test1:uniform}
\end{figure}

As a first numerical experiment we present a short parameter study
demonstrating the robustness of the proposed local flux reconstruction. 
We solve both problems with tensor product Lagrange elements
of order one and two
and compute a flux reconstruction in the Raviart-Thomas space of 
order two or three, respectively. 
We compute the flux reconstruction 
either 
by solving the adjoint mixed problem 
or by patch-wise flux reconstruction. 
The efficiency $I_{\text{eff},f}$ of the respective equilibrated error estimate with
reconstructed flux $\sigma_{h}$ is assessed with the ratio
in Equation \eqref{math:efficiency}.

Table~\ref{table:test1:uniform} gives computational results 
for first and second order Lagrange elements.
In the first test case with a manufactured solution, 
the efficiency indices of both methods of flux reconstruction are close to optimal,
i.e., $I_{\text{eff},f} \approx 1$. 
That the data of the problem are not cellwise polynomial 
is reflected in the slight underestimation of the error in
initial phases for the flux reconstruction via mixed methods.
Both estimators appear to be asymptotically exact;
we attribute this to the high regularity of $\nabla u$
and the higher polynomial order of the flux reconstruction, 
which make the term $\nabla u - \sigma$ converge to zero 
faster than $\nabla u - \nabla u_{h}$. 

In the second test case over the slit domain,
the efficiency is generally worse with $I_{\text{eff},f}\approx 1.5 - 3.0$ 
for the tested refinement levels. 
We attribute this to the corner singularity
limiting the regularity of $\nabla u$. 
We now also observe the predicted lower efficiency
with local instead of global flux reconstruction 
(see Inequality~\eqref{math:flux:globalvslocal})
and the effect of increased generic constants 
for higher polynomial order. 

\subsection{Energy-Oriented Adaptive Refinement}
\label{subsec:experiments-flux-adaptive}

Additionally, the equilibrated error estimator has been tested 
in the second test case as a driver for adaptive mesh refinement. 
The local indicators are computed in accordance with \eqref{math:localerrorindicator},
once with the flux reconstructed from a mixed finite element method 
and once with the locally reconstructed flux. 
We use a \emph{fixed fraction} marking strategy where in each
refinement step the 33\,\% of cells with highest indicator values are
marked for refinement. 
Table~\ref{table:test1:adaptive} displays the results 
with first-order Lagrange elements.
The obtained efficiency indices
$I_{\text{eff},f} \approx 1.4$ are considerably better than the
corresponding case with uniform refinement (see Table~\ref{table:test1:uniform}).
The estimator based on the local flux reconstruction has slightly worse
efficiency indices but performs similarly well 
as the estimator based on the mixed method.

\begin{table}[!t]
  \center
  \footnotesize
  \begin{tabular} {r r c c c c c c}
    \toprule
    & & & & \multicolumn{2}{c}{Mixed method}
    & \multicolumn{2}{c}{Flux reconstruction}\\
    \cmidrule(lr){5-6}
    \cmidrule(lr){7-8}
    & \#Dofs
    & \multicolumn{2}{c}{$\|\nabla u_h - \nabla u\|_A^2$}
    & $\|\nabla u_h - \sigma_h\|_A^2$ & $I_{\text{eff},f}$
    & $\|\nabla u_h - \sigma_h\|_A^2$ & $I_{\text{eff},f}$
    \\[0.5em]
    1 &    311 & 4.80e-4 & 1.20 & 5.44e-4 & 1.132 & 6.32e-4 & 1.315 \\
    2 &    628 & 2.82e-4 & 0.77 & 3.16e-4 & 1.118 & 3.58e-4 & 1.270 \\
    3 &   1159 & 1.27e-4 & 1.16 & 1.45e-4 & 1.142 & 1.65e-4 & 1.304 \\
    4 &   2333 & 6.97e-5 & 0.86 & 8.02e-5 & 1.150 & 9.07e-5 & 1.302 \\
    5 &   4393 & 3.05e-5 & 1.19 & 3.72e-5 & 1.219 & 4.23e-5 & 1.388 \\
    \bottomrule\\
  \end{tabular}
  \caption{Parameter study demonstrating the efficiency of the local flux
    reconstruction for the second test case of a slit domain.}
  \label{table:test1:adaptive}
\end{table}

\subsection{Goal-Oriented Error Estimation}
\label{subsec:experiments-goaloriented-uniform}

\begin{table}[!t]
  \center
  \footnotesize
  \subfloat[Manufactured solution, first and second order]{
    \begin{tabular} {r r c c c c c c c c}
      \toprule
      & & &
      & \multicolumn{2}{c}{$\eta^{\varrho\varpi}$}
      & \multicolumn{2}{c}{$\eta^{\rm II,\ast}$}
      & \multicolumn{2}{c}{$\eta^{\varrho\tau}$}\\
      \cmidrule(lr){5-6}
      \cmidrule(lr){7-8}
      \cmidrule(lr){9-10}
      & \#Dofs
      & \multicolumn{2}{c}{$\big|J(u)-J(u_h)\big|$}
          & $I_{\text{eff}}$ & $I_{\text{osc}}$
          & $I_{\text{eff}}$ & $I_{\text{osc}}$
          & $I_{\text{eff}}$ & $I_{\text{osc}}$
      \\[0.5em]
      1 &    289 &  8.38e-4 & 0.89 &  1.043 & 1.185 &  0.274 & 2.704 &  2.699 & 1.122  \\
      2 &   1089 &  2.17e-4 & 1.95 &  1.022 & 1.183 &  0.899 & 1.229 &  2.831 & 1.211  \\
      3 &   4225 &  5.45e-5 & 1.99 &  1.007 & 1.198 &  0.976 & 1.213 &  2.827 & 1.242  \\
      4 &  16641 &  1.36e-5 & 2.00 &  1.006 & 1.201 &  0.997 & 1.205 &  2.832 & 1.257  \\
      5 &  66049 &  3.37e-6 & 2.02 &  1.016 & 1.202 &  1.014 & 1.203 &  2.865 & 1.260
      \\[0.5em]
      1 &    289 &  5.65e-4 & 2.06 &  0.893 & 1.000 &  0.183 & 1.265 &  0.462 & 2.440  \\
      2 &   1089 &  2.22e-5 & 4.67 &  1.021 & 1.001 &  1.242 & 1.001 &  1.936 & 4.089  \\
      3 &   4225 &  1.43e-6 & 3.95 &  1.008 & 1.044 &  0.903 & 1.081 &  1.965 & 5.601  \\
      4 &  16641 &  9.15e-8 & 3.96 &  0.989 & 1.049 &  0.985 & 1.057 &  1.941 & 5.923  \\
      5 &  66049 &  5.64e-9 & 4.02 &  1.003 & 1.047 &  1.030 & 1.051 &  1.974 & 6.037  \\
      \bottomrule
    \end{tabular}}

  \subfloat[Slit domain, first and second order]{
    \begin{tabular} {r r c c c c c c c c}
      \toprule
      & & &
      & \multicolumn{2}{c}{$\eta^{\varrho\varpi}$}
      & \multicolumn{2}{c}{$\eta^{\rm II,\ast}$}
      & \multicolumn{2}{c}{$\eta^{\varrho\tau}$}\\
      \cmidrule(lr){5-6}
      \cmidrule(lr){7-8}
      \cmidrule(lr){9-10}
      & \#Dofs
      & \multicolumn{2}{c}{$\big|J(u)-J(u_h)\big|$}
          & $I_{\text{eff}}$ & $I_{\text{osc}}$
          & $I_{\text{eff}}$ & $I_{\text{osc}}$
          & $I_{\text{eff}}$ & $I_{\text{osc}}$ \\[0.5em]
          1 &   1105 &  3.08e-4 & 1.29 &  1.639 & 1.015 &  0.454 & 1.062 &  1.284 & 1.126  \\
          2 &   4257 &  1.38e-4 & 1.16 &  1.699 & 1.008 &  0.391 & 1.041 &  1.308 & 1.111  \\
          3 &  16705 &  6.51e-5 & 1.08 &  1.733 & 1.004 &  0.354 & 1.023 &  1.321 & 1.107  \\
          4 &  66177 &  3.17e-5 & 1.04 &  1.746 & 1.002 &  0.333 & 1.014 &  1.322 & 1.105  \\
          5 & 263425 &  1.58e-5 & 1.01 &  1.739 & 1.001 &  0.319 & 1.008 &  1.313 & 1.105
      \\[0.5em]
          1 &   1105 &  1.57e-4 & 0.92 &  2.164 & 1.001 &  0.115 & 1.093 &  1.836 & 1.709  \\
          2 &   4257 &  7.88e-5 & 0.99 &  2.045 & 1.000 &  0.108 & 1.066 &  1.736 & 1.706  \\
          3 &  16705 &  3.95e-5 & 1.00 &  1.859 & 1.000 &  0.099 & 1.063 &  1.578 & 1.710  \\
          4 &  66177 &  1.99e-5 & 0.99 &  1.572 & 1.000 &  0.083 & 1.062 &  1.334 & 1.715  \\
          5 & 263425 &  1.01e-5 & 0.98 &  1.201 & 1.000 &  0.064 & 1.062 &  1.019 & 1.718  \\
      \bottomrule
    \end{tabular}}
    \vspace{1em}
    \caption{Efficiency $I_{\text{eff}}$ and oscillatory behavior
      $I_{\text{osc}}$ of different error estimators for first and second
      polynomial order and for the two test cases of (a) a manufactured solution on
      the unit square and (b) a solution on the slit-domain. 
      }
  \label{table:test2}
\end{table}

Our next computational experiments assess the performance 
of our proposed goal-oriented error estimators.
We consider the error approximations and indicators 
\begin{align*}
  \begin{aligned}
    \eta^{\varrho\varpi} &=\sum_{ K \in \Omega_{h}} \eta^{\varrho\varpi}_K,
    &\qquad
    \eta^{\varrho\varpi}_K &= \int_K\langle\varrho_h,\varpi_h\rangle_A,
    \\
    \eta^{\rm II,\ast} &=\sum_{ K \in \Omega_{h}} \eta^{\rm II,\ast}_K,
    &\qquad
    \eta^{\rm II,\ast}_K &= \int_K\langle\varrho_h, z^{\ast}_{h} - z_h\rangle_A,
    \\
    \eta^{\varrho\tau} &=\sum_{ K \in \Omega_{h}} \eta^{\varrho\tau}_K,
    &\qquad
    \eta^{\varrho\tau}_K &= \int_K\langle\varrho_h,\tau_h\rangle_A.
  \end{aligned}
\end{align*}
The error indicators $\eta^{\varrho\varpi}$ and $\eta^{\varrho\tau}$
utilize the local flux reconstructions in the 
primal and the dual problems.
The error indicator $\eta^{\rm II,\ast}$ uses only the local flux
reconstruction of the primal problem 
but employs an approximation $z^{\ast}_{h}$ of the dual solution 
that is reconstructed by a well-known postprocessing 
technique---a patch-wise higher-order interpolation 
to a coarser mesh, $z^{\ast}_{h} = \Pi^{(2r)}_{2h} z_h$ 
in the notation of \cite{richter2006}.

Whenever $\eta^\ast = \sum_{ K \in \Omega_{h}}\eta^\ast_K$ 
is one of the error estimators with local indicators proposed 
in Section~\ref{sec:hypercircle:qoi},
we define the \emph{efficiency index} $I_{\text{eff}}$ 
and the \emph{oscillation index} $I_{\text{osc}} \geq 1$
by
\begin{align}
  \label{eq:errorestimatemeasures}
  I_{\text{eff}} := \frac{|\eta^\ast|}{\big|J(u)-J(u_h)\big|},
  \quad
  I_{\text{osc}} :=
  \frac1{|\eta^\ast|}\Big(\sum_{ K \in \Omega_{h}}\big|\eta^\ast_K\big|\Big)
  .
\end{align}
Whereas $I_{\text{eff}}$ measures the total efficiency of the error estimate,
the quantity $I_{\text{osc}}$ measures the oscillatory behavior of the local
indicators.
For our computational experiments we extend the two different test cases 
by goal functionals. Specifically, the quantity of interest is defined 
by small regularized point evaluations at $(0.5, 0.117)$ in the first case 
and at $(0.25,0.75)$ in the second test case. 

We have assessed the performance of the error approximations and indicators 
under uniform refinement, again considering 
first and second order Lagrange elements. 
For the sake of brevity we consider only the (practically interesting) 
case of local flux reconstruction.

The results are given in Table~\ref{table:test2}.
The proposed error approximation $\eta^{\varrho\varpi}$ 
performs very well and appears to behave asymptotically exact 
in both test cases and both polynomial orders. 
By comparison, the error approximation $\eta^{\rm II,\ast}$,
which coincides with the error approximation 
of the dual weighted residual method 
(see Remark~\ref{remark:dwr:vergleich}),
performs reasonably on sufficiently fine meshes 
in the first (regular) test case  
but fails to approximate the true error 
in the second test case over the slit domain. 
The third error indicator $\eta^{\varrho\tau}$ 
gives reliable upper bounds in the experiments 
but consistently overestimates the error 
in the regular first test case
when compared to $\eta^{\varrho\varpi}$ and $\eta^{\rm II,\ast}$. 
%
%
%
Finally, 
it is worth mentioning that $\eta^{\varrho\tau}$ 
consistently exhibits a pronounced oscillatory behavior
whereas the oscillation index of $\eta^{\varrho\varpi}$ and $\eta^{\rm II,\ast}$
appear to be asymptotically optimal or near-optimal, respectively.


\subsection{Goal-Oriented Estimator Competition}
\label{subsec:experiments-goaloriented-competition}

\begin{figure}[p]
  \centering
  \subfloat[unit square]{\begin{tikzpicture}[gnuplot,scale=0.8]
\path (0.000,0.000) rectangle (12.500,8.750);
\gpcolor{color=gp lt color border}
\gpsetlinetype{gp lt border}
\gpsetdashtype{gp dt solid}
\gpsetlinewidth{1.00}
\draw[gp path] (2.240,0.985)--(2.330,0.985);
\draw[gp path] (11.947,0.985)--(11.857,0.985);
\draw[gp path] (2.240,1.404)--(2.330,1.404);
\draw[gp path] (11.947,1.404)--(11.857,1.404);
\gpcolor{color=gp lt color axes}
\gpsetlinetype{gp lt axes}
\gpsetdashtype{gp dt axes}
\gpsetlinewidth{0.50}
\draw[gp path] (2.240,1.603)--(11.947,1.603);
\gpcolor{color=gp lt color border}
\gpsetlinetype{gp lt border}
\gpsetdashtype{gp dt solid}
\gpsetlinewidth{1.00}
\draw[gp path] (2.240,1.603)--(2.420,1.603);
\draw[gp path] (11.947,1.603)--(11.767,1.603);
\node[gp node right] at (2.056,1.603) {$10^{-5}$};
\draw[gp path] (2.240,2.221)--(2.330,2.221);
\draw[gp path] (11.947,2.221)--(11.857,2.221);
\draw[gp path] (2.240,3.038)--(2.330,3.038);
\draw[gp path] (11.947,3.038)--(11.857,3.038);
\draw[gp path] (2.240,3.457)--(2.330,3.457);
\draw[gp path] (11.947,3.457)--(11.857,3.457);
\gpcolor{color=gp lt color axes}
\gpsetlinetype{gp lt axes}
\gpsetdashtype{gp dt axes}
\gpsetlinewidth{0.50}
\draw[gp path] (2.240,3.656)--(11.947,3.656);
\gpcolor{color=gp lt color border}
\gpsetlinetype{gp lt border}
\gpsetdashtype{gp dt solid}
\gpsetlinewidth{1.00}
\draw[gp path] (2.240,3.656)--(2.420,3.656);
\draw[gp path] (11.947,3.656)--(11.767,3.656);
\node[gp node right] at (2.056,3.656) {$10^{-4}$};
\draw[gp path] (2.240,4.274)--(2.330,4.274);
\draw[gp path] (11.947,4.274)--(11.857,4.274);
\draw[gp path] (2.240,5.092)--(2.330,5.092);
\draw[gp path] (11.947,5.092)--(11.857,5.092);
\draw[gp path] (2.240,5.511)--(2.330,5.511);
\draw[gp path] (11.947,5.511)--(11.857,5.511);
\gpcolor{color=gp lt color axes}
\gpsetlinetype{gp lt axes}
\gpsetdashtype{gp dt axes}
\gpsetlinewidth{0.50}
\draw[gp path] (2.240,5.710)--(11.947,5.710);
\gpcolor{color=gp lt color border}
\gpsetlinetype{gp lt border}
\gpsetdashtype{gp dt solid}
\gpsetlinewidth{1.00}
\draw[gp path] (2.240,5.710)--(2.420,5.710);
\draw[gp path] (11.947,5.710)--(11.767,5.710);
\node[gp node right] at (2.056,5.710) {$10^{-3}$};
\draw[gp path] (2.240,6.328)--(2.330,6.328);
\draw[gp path] (11.947,6.328)--(11.857,6.328);
\draw[gp path] (2.240,7.145)--(2.330,7.145);
\draw[gp path] (11.947,7.145)--(11.857,7.145);
\draw[gp path] (2.240,7.564)--(2.330,7.564);
\draw[gp path] (11.947,7.564)--(11.857,7.564);
\gpcolor{color=gp lt color axes}
\gpsetlinetype{gp lt axes}
\gpsetdashtype{gp dt axes}
\gpsetlinewidth{0.50}
\draw[gp path] (2.240,7.763)--(9.007,7.763);
\draw[gp path] (11.763,7.763)--(11.947,7.763);
\gpcolor{color=gp lt color border}
\gpsetlinetype{gp lt border}
\gpsetdashtype{gp dt solid}
\gpsetlinewidth{1.00}
\draw[gp path] (2.240,7.763)--(2.420,7.763);
\draw[gp path] (11.947,7.763)--(11.767,7.763);
\node[gp node right] at (2.056,7.763) {$10^{-2}$};
\draw[gp path] (2.240,8.381)--(2.330,8.381);
\draw[gp path] (11.947,8.381)--(11.857,8.381);
\draw[gp path] (2.240,0.985)--(2.240,1.075);
\draw[gp path] (2.240,8.381)--(2.240,8.291);
\draw[gp path] (2.873,0.985)--(2.873,1.075);
\draw[gp path] (2.873,8.381)--(2.873,8.291);
\draw[gp path] (3.323,0.985)--(3.323,1.075);
\draw[gp path] (3.323,8.381)--(3.323,8.291);
\draw[gp path] (3.671,0.985)--(3.671,1.075);
\draw[gp path] (3.671,8.381)--(3.671,8.291);
\draw[gp path] (3.956,0.985)--(3.956,1.075);
\draw[gp path] (3.956,8.381)--(3.956,8.291);
\draw[gp path] (4.197,0.985)--(4.197,1.075);
\draw[gp path] (4.197,8.381)--(4.197,8.291);
\draw[gp path] (4.405,0.985)--(4.405,1.075);
\draw[gp path] (4.405,8.381)--(4.405,8.291);
\draw[gp path] (4.589,0.985)--(4.589,1.075);
\draw[gp path] (4.589,8.381)--(4.589,8.291);
\gpcolor{color=gp lt color axes}
\gpsetlinetype{gp lt axes}
\gpsetdashtype{gp dt axes}
\gpsetlinewidth{0.50}
\draw[gp path] (4.754,0.985)--(4.754,8.381);
\gpcolor{color=gp lt color border}
\gpsetlinetype{gp lt border}
\gpsetdashtype{gp dt solid}
\gpsetlinewidth{1.00}
\draw[gp path] (4.754,0.985)--(4.754,1.165);
\draw[gp path] (4.754,8.381)--(4.754,8.201);
\node[gp node center] at (4.754,0.677) {$100$};
\draw[gp path] (5.837,0.985)--(5.837,1.075);
\draw[gp path] (5.837,8.381)--(5.837,8.291);
\draw[gp path] (6.470,0.985)--(6.470,1.075);
\draw[gp path] (6.470,8.381)--(6.470,8.291);
\draw[gp path] (6.919,0.985)--(6.919,1.075);
\draw[gp path] (6.919,8.381)--(6.919,8.291);
\draw[gp path] (7.268,0.985)--(7.268,1.075);
\draw[gp path] (7.268,8.381)--(7.268,8.291);
\draw[gp path] (7.553,0.985)--(7.553,1.075);
\draw[gp path] (7.553,8.381)--(7.553,8.291);
\draw[gp path] (7.793,0.985)--(7.793,1.075);
\draw[gp path] (7.793,8.381)--(7.793,8.291);
\draw[gp path] (8.002,0.985)--(8.002,1.075);
\draw[gp path] (8.002,8.381)--(8.002,8.291);
\draw[gp path] (8.186,0.985)--(8.186,1.075);
\draw[gp path] (8.186,8.381)--(8.186,8.291);
\gpcolor{color=gp lt color axes}
\gpsetlinetype{gp lt axes}
\gpsetdashtype{gp dt axes}
\gpsetlinewidth{0.50}
\draw[gp path] (8.350,0.985)--(8.350,8.381);
\gpcolor{color=gp lt color border}
\gpsetlinetype{gp lt border}
\gpsetdashtype{gp dt solid}
\gpsetlinewidth{1.00}
\draw[gp path] (8.350,0.985)--(8.350,1.165);
\draw[gp path] (8.350,8.381)--(8.350,8.201);
\node[gp node center] at (8.350,0.677) {$1000$};
\draw[gp path] (9.433,0.985)--(9.433,1.075);
\draw[gp path] (9.433,8.381)--(9.433,8.291);
\draw[gp path] (10.066,0.985)--(10.066,1.075);
\draw[gp path] (10.066,8.381)--(10.066,8.291);
\draw[gp path] (10.516,0.985)--(10.516,1.075);
\draw[gp path] (10.516,8.381)--(10.516,8.291);
\draw[gp path] (10.864,0.985)--(10.864,1.075);
\draw[gp path] (10.864,8.381)--(10.864,8.291);
\draw[gp path] (11.149,0.985)--(11.149,1.075);
\draw[gp path] (11.149,8.381)--(11.149,8.291);
\draw[gp path] (11.390,0.985)--(11.390,1.075);
\draw[gp path] (11.390,8.381)--(11.390,8.291);
\draw[gp path] (11.598,0.985)--(11.598,1.075);
\draw[gp path] (11.598,8.381)--(11.598,8.291);
\draw[gp path] (11.782,0.985)--(11.782,1.075);
\draw[gp path] (11.782,8.381)--(11.782,8.291);
\gpcolor{color=gp lt color axes}
\gpsetlinetype{gp lt axes}
\gpsetdashtype{gp dt axes}
\gpsetlinewidth{0.50}
\draw[gp path] (11.947,0.985)--(11.947,8.381);
\gpcolor{color=gp lt color border}
\gpsetlinetype{gp lt border}
\gpsetdashtype{gp dt solid}
\gpsetlinewidth{1.00}
\draw[gp path] (11.947,0.985)--(11.947,1.165);
\draw[gp path] (11.947,8.381)--(11.947,8.201);
\node[gp node center] at (11.947,0.677) {$10000$};
\draw[gp path] (2.240,8.381)--(2.240,0.985)--(11.947,0.985)--(11.947,8.381)--cycle;
\node[gp node center,rotate=-270] at (0.246,4.683) {$|J(u)-J(u_h)|$};
\node[gp node center] at (7.093,0.215) {\#dofs primal};
\node[gp node right] at (10.479,8.047) {uniform};
\gpcolor{rgb color={0.580,0.000,0.827}}
\draw[gp path] (10.663,8.047)--(11.579,8.047);
\draw[gp path] (2.589,7.827)--(4.425,6.100)--(6.412,5.552)--(8.484,4.346)--(10.601,3.115)%
  --(11.947,2.338);
\gpcolor{color=gp lt color border}
\node[gp node right] at (10.479,7.239) {$\eta^{\varrho\varpi}_K$};
\gpcolor{rgb color={0.000,0.620,0.451}}
\draw[gp path] (10.663,7.239)--(11.579,7.239);
\draw[gp path] (2.589,7.827)--(3.607,6.082)--(4.641,5.564)--(5.740,4.420)--(6.900,3.425)%
  --(7.956,2.562)--(8.987,2.043)--(10.036,1.226);
\gpcolor{color=gp lt color border}
\node[gp node right] at (10.479,6.431) {$\eta^{\text{II},\ast}_K$};
\gpcolor{rgb color={0.337,0.706,0.914}}
\draw[gp path] (10.663,6.431)--(11.579,6.431);
\draw[gp path] (2.589,7.827)--(3.640,6.102)--(4.624,5.563)--(5.867,4.625)--(6.864,3.569)%
  --(7.880,2.634)--(8.934,2.093)--(9.960,1.280);
\gpcolor{color=gp lt color border}
\node[gp node right] at (10.479,5.623) {$\eta^{\varrho\tau}_K$};
\gpcolor{rgb color={0.902,0.624,0.000}}
\draw[gp path] (10.663,5.623)--(11.579,5.623);
\draw[gp path] (2.589,7.827)--(3.640,6.080)--(4.641,5.659)--(5.655,4.682)--(6.776,3.552)%
  --(7.990,3.028)--(9.139,2.324)--(10.325,1.870);
\gpcolor{color=gp lt color border}
\draw[gp path] (2.240,8.381)--(2.240,0.985)--(11.947,0.985)--(11.947,8.381)--cycle;
\gpdefrectangularnode{gp plot 1}{\pgfpoint{2.240cm}{0.985cm}}{\pgfpoint{11.947cm}{8.381cm}}
\end{tikzpicture}
  
  \subfloat[slit domain]{\begin{tikzpicture}[gnuplot,scale=0.8]
\path (0.000,0.000) rectangle (12.500,8.750);
\gpcolor{color=gp lt color border}
\gpsetlinetype{gp lt border}
\gpsetdashtype{gp dt solid}
\gpsetlinewidth{1.00}
\draw[gp path] (2.240,0.985)--(2.330,0.985);
\draw[gp path] (11.947,0.985)--(11.857,0.985);
\draw[gp path] (2.240,1.419)--(2.330,1.419);
\draw[gp path] (11.947,1.419)--(11.857,1.419);
\draw[gp path] (2.240,1.727)--(2.330,1.727);
\draw[gp path] (11.947,1.727)--(11.857,1.727);
\draw[gp path] (2.240,1.966)--(2.330,1.966);
\draw[gp path] (11.947,1.966)--(11.857,1.966);
\draw[gp path] (2.240,2.161)--(2.330,2.161);
\draw[gp path] (11.947,2.161)--(11.857,2.161);
\draw[gp path] (2.240,2.326)--(2.330,2.326);
\draw[gp path] (11.947,2.326)--(11.857,2.326);
\draw[gp path] (2.240,2.469)--(2.330,2.469);
\draw[gp path] (11.947,2.469)--(11.857,2.469);
\draw[gp path] (2.240,2.595)--(2.330,2.595);
\draw[gp path] (11.947,2.595)--(11.857,2.595);
\gpcolor{color=gp lt color axes}
\gpsetlinetype{gp lt axes}
\gpsetdashtype{gp dt axes}
\gpsetlinewidth{0.50}
\draw[gp path] (2.240,2.708)--(11.947,2.708);
\gpcolor{color=gp lt color border}
\gpsetlinetype{gp lt border}
\gpsetdashtype{gp dt solid}
\gpsetlinewidth{1.00}
\draw[gp path] (2.240,2.708)--(2.420,2.708);
\draw[gp path] (11.947,2.708)--(11.767,2.708);
\node[gp node right] at (2.056,2.708) {$10^{-5}$};
\draw[gp path] (2.240,3.450)--(2.330,3.450);
\draw[gp path] (11.947,3.450)--(11.857,3.450);
\draw[gp path] (2.240,3.884)--(2.330,3.884);
\draw[gp path] (11.947,3.884)--(11.857,3.884);
\draw[gp path] (2.240,4.192)--(2.330,4.192);
\draw[gp path] (11.947,4.192)--(11.857,4.192);
\draw[gp path] (2.240,4.431)--(2.330,4.431);
\draw[gp path] (11.947,4.431)--(11.857,4.431);
\draw[gp path] (2.240,4.627)--(2.330,4.627);
\draw[gp path] (11.947,4.627)--(11.857,4.627);
\draw[gp path] (2.240,4.792)--(2.330,4.792);
\draw[gp path] (11.947,4.792)--(11.857,4.792);
\draw[gp path] (2.240,4.935)--(2.330,4.935);
\draw[gp path] (11.947,4.935)--(11.857,4.935);
\draw[gp path] (2.240,5.061)--(2.330,5.061);
\draw[gp path] (11.947,5.061)--(11.857,5.061);
\gpcolor{color=gp lt color axes}
\gpsetlinetype{gp lt axes}
\gpsetdashtype{gp dt axes}
\gpsetlinewidth{0.50}
\draw[gp path] (2.240,5.174)--(11.947,5.174);
\gpcolor{color=gp lt color border}
\gpsetlinetype{gp lt border}
\gpsetdashtype{gp dt solid}
\gpsetlinewidth{1.00}
\draw[gp path] (2.240,5.174)--(2.420,5.174);
\draw[gp path] (11.947,5.174)--(11.767,5.174);
\node[gp node right] at (2.056,5.174) {$10^{-4}$};
\draw[gp path] (2.240,5.916)--(2.330,5.916);
\draw[gp path] (11.947,5.916)--(11.857,5.916);
\draw[gp path] (2.240,6.350)--(2.330,6.350);
\draw[gp path] (11.947,6.350)--(11.857,6.350);
\draw[gp path] (2.240,6.658)--(2.330,6.658);
\draw[gp path] (11.947,6.658)--(11.857,6.658);
\draw[gp path] (2.240,6.897)--(2.330,6.897);
\draw[gp path] (11.947,6.897)--(11.857,6.897);
\draw[gp path] (2.240,7.092)--(2.330,7.092);
\draw[gp path] (11.947,7.092)--(11.857,7.092);
\draw[gp path] (2.240,7.257)--(2.330,7.257);
\draw[gp path] (11.947,7.257)--(11.857,7.257);
\draw[gp path] (2.240,7.400)--(2.330,7.400);
\draw[gp path] (11.947,7.400)--(11.857,7.400);
\draw[gp path] (2.240,7.526)--(2.330,7.526);
\draw[gp path] (11.947,7.526)--(11.857,7.526);
\gpcolor{color=gp lt color axes}
\gpsetlinetype{gp lt axes}
\gpsetdashtype{gp dt axes}
\gpsetlinewidth{0.50}
\draw[gp path] (2.240,7.639)--(9.007,7.639);
\draw[gp path] (11.763,7.639)--(11.947,7.639);
\gpcolor{color=gp lt color border}
\gpsetlinetype{gp lt border}
\gpsetdashtype{gp dt solid}
\gpsetlinewidth{1.00}
\draw[gp path] (2.240,7.639)--(2.420,7.639);
\draw[gp path] (11.947,7.639)--(11.767,7.639);
\node[gp node right] at (2.056,7.639) {$10^{-3}$};
\draw[gp path] (2.240,8.381)--(2.330,8.381);
\draw[gp path] (11.947,8.381)--(11.857,8.381);
\draw[gp path] (2.240,0.985)--(2.240,1.075);
\draw[gp path] (2.240,8.381)--(2.240,8.291);
\draw[gp path] (2.764,0.985)--(2.764,1.075);
\draw[gp path] (2.764,8.381)--(2.764,8.291);
\gpcolor{color=gp lt color axes}
\gpsetlinetype{gp lt axes}
\gpsetdashtype{gp dt axes}
\gpsetlinewidth{0.50}
\draw[gp path] (3.013,0.985)--(3.013,8.381);
\gpcolor{color=gp lt color border}
\gpsetlinetype{gp lt border}
\gpsetdashtype{gp dt solid}
\gpsetlinewidth{1.00}
\draw[gp path] (3.013,0.985)--(3.013,1.165);
\draw[gp path] (3.013,8.381)--(3.013,8.201);
\node[gp node center] at (3.013,0.677) {$100$};
\draw[gp path] (3.787,0.985)--(3.787,1.075);
\draw[gp path] (3.787,8.381)--(3.787,8.291);
\draw[gp path] (4.809,0.985)--(4.809,1.075);
\draw[gp path] (4.809,8.381)--(4.809,8.291);
\draw[gp path] (5.334,0.985)--(5.334,1.075);
\draw[gp path] (5.334,8.381)--(5.334,8.291);
\gpcolor{color=gp lt color axes}
\gpsetlinetype{gp lt axes}
\gpsetdashtype{gp dt axes}
\gpsetlinewidth{0.50}
\draw[gp path] (5.583,0.985)--(5.583,8.381);
\gpcolor{color=gp lt color border}
\gpsetlinetype{gp lt border}
\gpsetdashtype{gp dt solid}
\gpsetlinewidth{1.00}
\draw[gp path] (5.583,0.985)--(5.583,1.165);
\draw[gp path] (5.583,8.381)--(5.583,8.201);
\node[gp node center] at (5.583,0.677) {$1000$};
\draw[gp path] (6.356,0.985)--(6.356,1.075);
\draw[gp path] (6.356,8.381)--(6.356,8.291);
\draw[gp path] (7.378,0.985)--(7.378,1.075);
\draw[gp path] (7.378,8.381)--(7.378,8.291);
\draw[gp path] (7.903,0.985)--(7.903,1.075);
\draw[gp path] (7.903,8.381)--(7.903,8.291);
\gpcolor{color=gp lt color axes}
\gpsetlinetype{gp lt axes}
\gpsetdashtype{gp dt axes}
\gpsetlinewidth{0.50}
\draw[gp path] (8.152,0.985)--(8.152,8.381);
\gpcolor{color=gp lt color border}
\gpsetlinetype{gp lt border}
\gpsetdashtype{gp dt solid}
\gpsetlinewidth{1.00}
\draw[gp path] (8.152,0.985)--(8.152,1.165);
\draw[gp path] (8.152,8.381)--(8.152,8.201);
\node[gp node center] at (8.152,0.677) {$10000$};
\draw[gp path] (8.925,0.985)--(8.925,1.075);
\draw[gp path] (8.925,8.381)--(8.925,8.291);
\draw[gp path] (9.948,0.985)--(9.948,1.075);
\draw[gp path] (9.948,8.381)--(9.948,8.291);
\draw[gp path] (10.472,0.985)--(10.472,1.075);
\draw[gp path] (10.472,8.381)--(10.472,8.291);
\gpcolor{color=gp lt color axes}
\gpsetlinetype{gp lt axes}
\gpsetdashtype{gp dt axes}
\gpsetlinewidth{0.50}
\draw[gp path] (10.721,0.985)--(10.721,6.969);
\draw[gp path] (10.721,8.201)--(10.721,8.381);
\gpcolor{color=gp lt color border}
\gpsetlinetype{gp lt border}
\gpsetdashtype{gp dt solid}
\gpsetlinewidth{1.00}
\draw[gp path] (10.721,0.985)--(10.721,1.165);
\draw[gp path] (10.721,8.381)--(10.721,8.201);
\node[gp node center] at (10.721,0.677) {$100000$};
\draw[gp path] (11.495,0.985)--(11.495,1.075);
\draw[gp path] (11.495,8.381)--(11.495,8.291);
\draw[gp path] (2.240,8.381)--(2.240,0.985)--(11.947,0.985)--(11.947,8.381)--cycle;
\node[gp node center,rotate=-270] at (0.246,4.683) {$|J(u)-J(u_h)|$};
\node[gp node center] at (7.093,0.215) {\#dofs primal};
\node[gp node right] at (10.479,8.047) {uniform};
\gpcolor{rgb color={0.580,0.000,0.827}}
\draw[gp path] (10.663,8.047)--(11.579,8.047);
\draw[gp path] (2.832,7.865)--(4.228,6.600)--(5.694,5.578)--(7.199,4.634)--(8.724,3.751)%
  --(10.261,2.879)--(11.802,1.942);
\gpcolor{color=gp lt color border}
\node[gp node right] at (10.479,7.239) {$\eta^{\varrho\varpi}_K$};
\gpcolor{rgb color={0.000,0.620,0.451}}
\draw[gp path] (10.663,7.239)--(11.579,7.239);
\draw[gp path] (2.832,7.865)--(3.552,6.644)--(4.301,5.561)--(5.060,4.924)--(5.815,4.122)%
  --(6.560,3.327)--(7.318,2.546)--(8.057,1.525);
\gpcolor{color=gp lt color border}
\node[gp node right] at (10.479,6.431) {$\eta^{\text{II},\ast}_K$};
\gpcolor{rgb color={0.337,0.706,0.914}}
\draw[gp path] (10.663,6.431)--(11.579,6.431);
\draw[gp path] (2.832,7.865)--(3.572,6.700)--(4.315,5.611)--(5.035,4.884)--(5.763,4.168)%
  --(6.502,3.352)--(7.257,2.561)--(8.005,1.513);
\gpcolor{color=gp lt color border}
\node[gp node right] at (10.479,5.623) {$\eta^{\varrho\tau}_K$};
\gpcolor{rgb color={0.902,0.624,0.000}}
\draw[gp path] (10.663,5.623)--(11.579,5.623);
\draw[gp path] (2.832,7.865)--(3.586,6.568)--(4.366,5.689)--(5.244,5.128)--(6.147,4.388)%
  --(7.047,3.883)--(7.959,3.009)--(8.902,2.146);
\gpcolor{color=gp lt color border}
\draw[gp path] (2.240,8.381)--(2.240,0.985)--(11.947,0.985)--(11.947,8.381)--cycle;
\gpdefrectangularnode{gp plot 1}{\pgfpoint{2.240cm}{0.985cm}}{\pgfpoint{11.947cm}{8.381cm}}
\end{tikzpicture}
  \\[1em]
  \caption{
    Performance plot; error in the quantity of interest $|J(u)-J(u_h)|$
    plotted over total number of primal degrees of freedoms for (a) the
    model problem on the unit square and (b) the model problem on
    the slit domain. Results for lowest-order, linear finite elements.
  }
  \label{fig:performanceplot}
\end{figure}
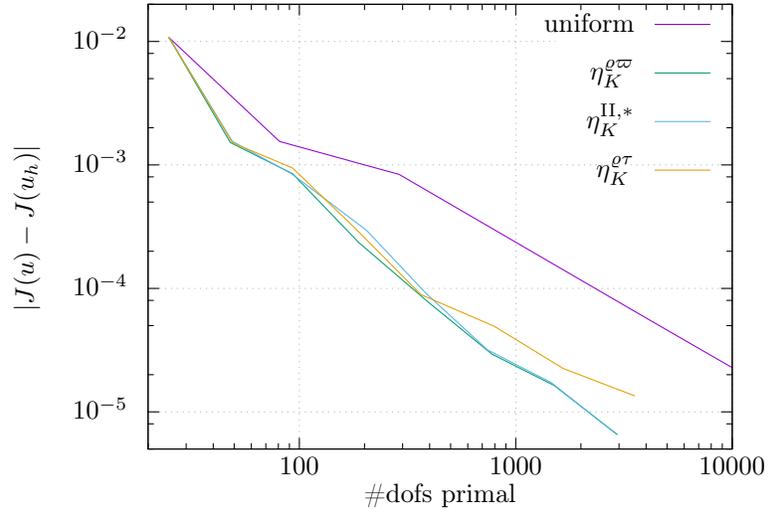
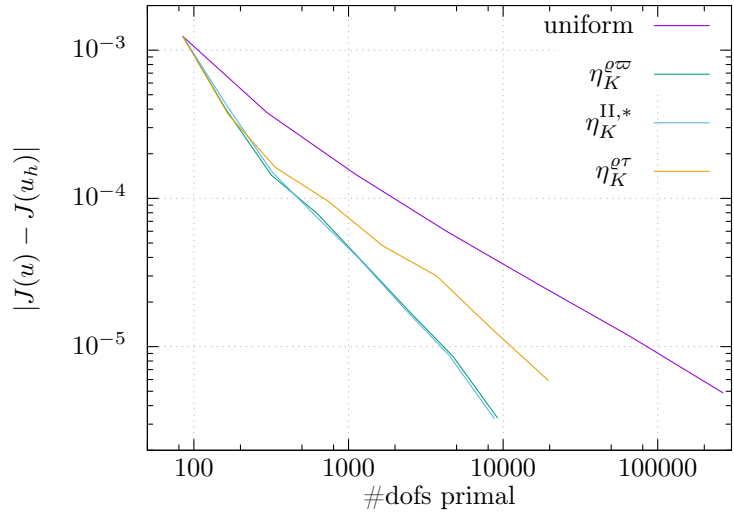

\begin{figure}[t!]
  \centering
  \subfloat[$\eta^{\varrho\varpi}$]{\includegraphics[height=3.9cm]{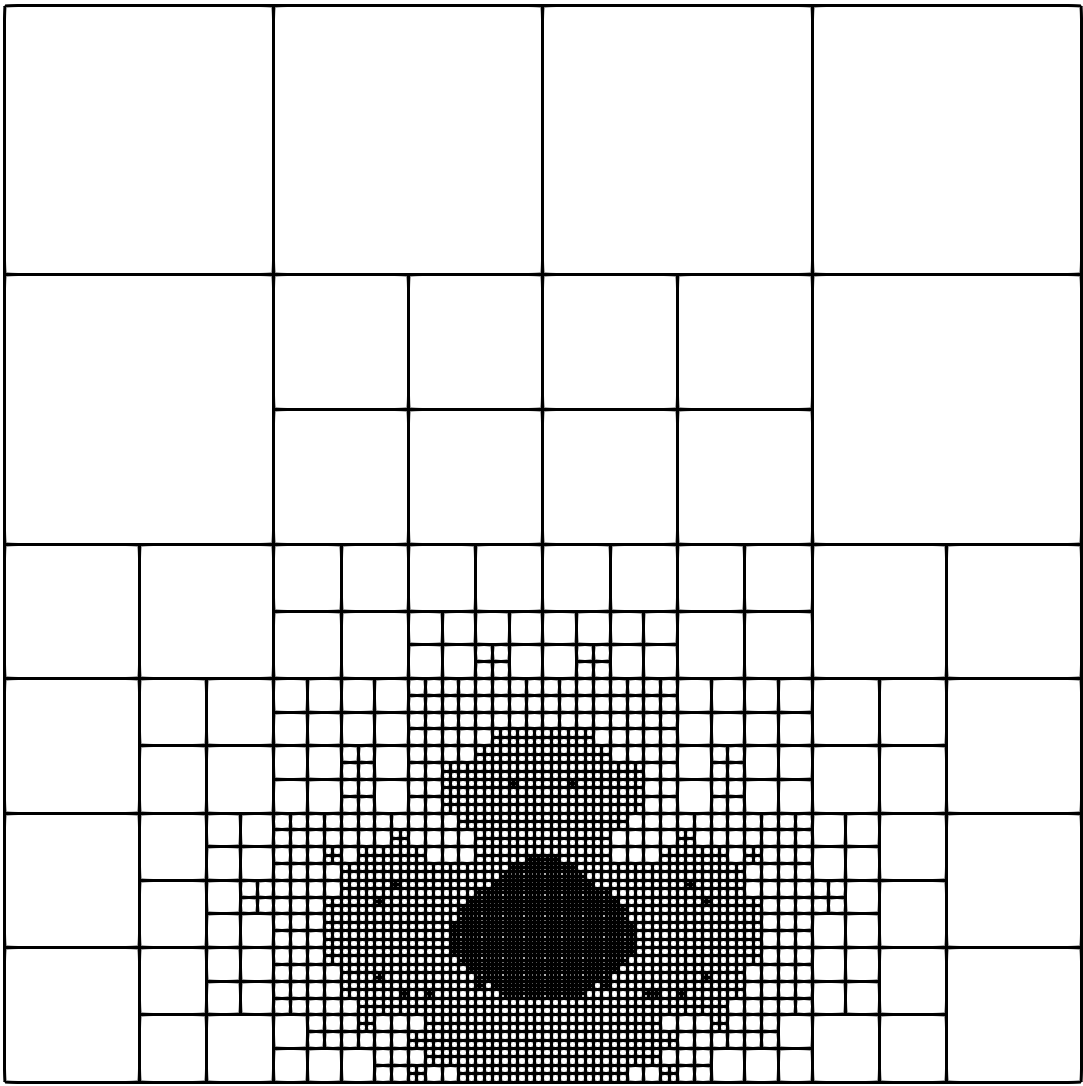}}
  \hspace{1em}
  \subfloat[$\eta^{\rm II,\ast}$]{\includegraphics[height=3.9cm]{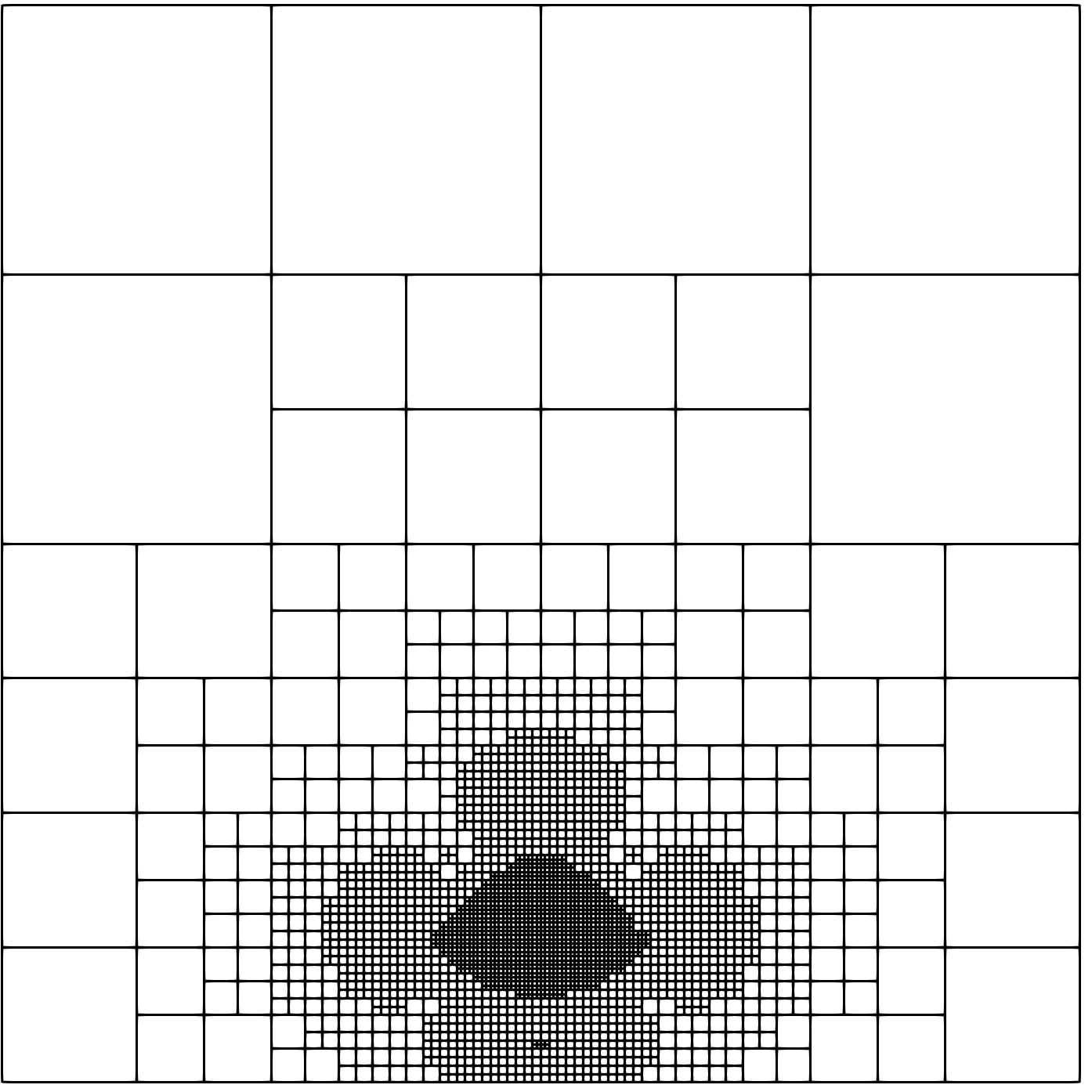}}
  \hspace{1em}
  \subfloat[$\eta^{\varrho\tau}$]{\includegraphics[height=3.9cm]{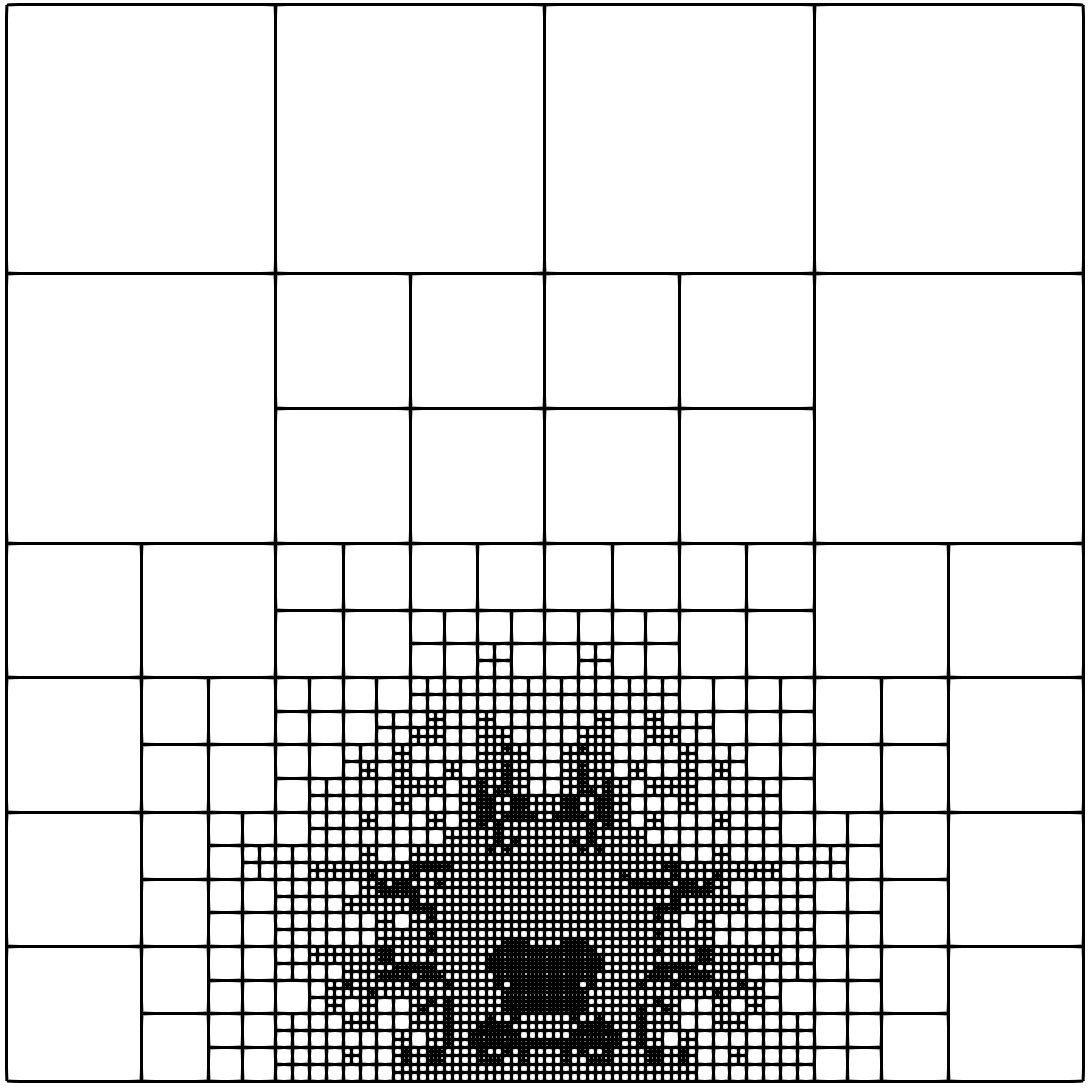}}
  \\[1em]
  \caption{
    Resulting locally refined meshes obtained by different local indicators
    for the test case defined on the unit square. The unbalanced,
    oscillatory nature of $\eta^{\varrho\tau}$ causes significant spurious refinement.}
  \label{fig:refinedmeshes}
\end{figure}

We conclude with testing the performance of the local indicators 
for adaptive refinement with lowest order Lagrange elements. 
Again, the local smoothed Dirac deltas are the goal functionals.
Figure~\ref{fig:performanceplot} shows a ``performance plot'' (error over 
number of primal degrees of freedom). All local indicators perform
qualitatively better than uniform refinement. In both test cases, the two
indicators $\eta^{\varrho\varpi}$ and $\eta^{\rm II,\ast}$ perform optimally,
i.e., with linear order of convergence. 
The indicator variant $\eta^{\varrho\tau}$, however, leads to
suboptimal asymptotic behavior. We attribute this to the oscillatory
behavior, i.e., to ``unbalanced'' local indicators 
(see Table~\ref{table:test2}). 
Figure~\ref{fig:refinedmeshes} displays the final locally refined meshes 
for visual comparison. 
The spurious refinement caused by the oscillatory nature 
of the indicator $\eta^{\varrho\tau}$ is clearly visible. 
%
%

\section{Conclusion}
\label{sec:conclusion}

The nature of our results is both experimental and theoretical. 

Equilibrated error estimators have been implemented with the software
library deal.II \cite{dealII85}.
As a theoretical foundation we have discussed 
the construction and well-posedness 
of local flux reconstruction problems 
over quadrilateral meshes with hanging nodes 
(see Lemma~\ref{lemma:einzigeslemma}). 
Techniques inspired by commuting interpolators have been used, the bulk of
which have only been discussed for conforming simplicial meshes in the
literature so far.
%
Our computational experiments show uniformly bounded efficiency indices
for all tested polynomial orders. 
The experimental observation that the localized flux reconstruction leads
to competitive error estimates when compared to flux reconstruction via
mixed finite element methods is corroborated by the inverse inequality
\eqref{math:flux:globalvslocal}.
To the best of our knowledge,
this is the first time this
experimental observation has been traced to a rigorous estimate. We
emphasize that the efficiency of the equilibrated error
estimate can be understood best through convergence estimates for mixed
finite element methods. 

Our practical main results target the goal-oriented error estimation. The
proposed error estimator $\eta^{\varrho\varpi}$ shows efficiency indices in the
range $0.9-2.0$, depending on the regularity of the domain, and its
oscillation index is very well controlled by $1.2$. This distinguishes it
from the dual weighted residual method, which typically requires
sufficiently fine meshes and higher problem regularity.
Furthermore,
our local indicators $\eta^{\varrho\varpi}_{K}$
shows advantages over 
the error indicators $\eta^{\varrho\tau}_{K}$, whose high
oscillation index points to their suboptimal marking in goal-oriented
adaptive finite element methods.
We therefore recommend the indicators $\eta^{\varrho\varpi}_{K}$ 
for adaptive marking and error estimation in the quantity of interest 
at the cost of numerically solving both the primal and the dual problem 
and performing a localized flux reconstruction for both. 

%

\bibliographystyle{amsplain}
\bibliography{references}

\providecommand{\bysame}{\leavevmode\hbox to3em{\hrulefill}\thinspace}
\providecommand{\MR}{\relax\ifhmode\unskip\space\fi MR }
\providecommand{\MRhref}[2]{%
  \href{http://www.ams.org/mathscinet-getitem?mr=#1}{#2}
}
\providecommand{\href}[2]{#2}
\begin{thebibliography}{10}

\bibitem{ainsworth2007analysis}
Mark Ainsworth, Leszek Demkowicz, and Chang-Wan Kim, \emph{Analysis of the
  equilibrated residual method for a posteriori error estimation on meshes with
  hanging nodes}, Computer Methods in Applied Mechanics and Engineering
  \textbf{196} (2007), no.~37, 3493--3507.

\bibitem{ainsworth1993unified}
Mark Ainsworth and J.~Tinsley Oden, \emph{A unified approach to a posteriori
  error estimation using element residual methods}, Numerische Mathematik
  \textbf{65} (1993), no.~1, 23--50.

\bibitem{ainsworth2011posteriori}
\bysame, \emph{A posteriori error estimation in finite element analysis},
  vol.~37, John Wiley \& Sons, 2011.

\bibitem{arioli2013interplay}
Mario Arioli, J{\"o}rg Liesen, Agnieszka Mi{\k{e}}dlar, and Zden{\v{e}}k
  Strako{\v{s}}, \emph{Interplay between discretization and algebraic
  computation in adaptive numerical solution of elliptic pde problems},
  GAMM-Mitteilungen \textbf{36} (2013), no.~1, 102--129.

\bibitem{dealII85}
Daniel Arndt, Wolfgang Bangerth, Denis Davydov, Timo Heister, Luca Heltai,
  Martin Kronbichler, Matthias Maier, Bruno Turcksin, and David Wells,
  \emph{The \texttt{deal.II} library, version 8.5}, Journal of Numerical
  Mathematics \textbf{in press} (2017).

\bibitem{arnold2015finite}
Douglas~N. Arnold, Daniele Boffi, and Francesca Bonizzoni, \emph{Finite element
  differential forms on curvilinear cubic meshes and their approximation
  properties}, Numerische Mathematik \textbf{129} (2015), no.~1, 1--20.

\bibitem{bank1985some}
Randolph~E. Bank and Alan Weiser, \emph{Some a posteriori error estimators for
  elliptic partial differential equations}, Mathematics of computation
  \textbf{44} (1985), no.~170, 283--301.

\bibitem{becker2001optimal}
Roland Becker and Rolf Rannacher, \emph{An optimal control approach to a
  posteriori error estimation in finite element methods}, Acta Numerica 2001
  \textbf{10} (2001), 1--102.

\bibitem{richter2006}
Malte Braack and Thomas Richter, \emph{{Solutions of 3d navier-stokes benchmark
  problems with adaptive finite elements}}, Computers and Fluids \textbf{35}
  (2006), 27--392.

\bibitem{braess2007finite}
Dietrich Braess, \emph{Finite elements: Theory, fast solvers, and applications
  in solid mechanics}, Cambridge University Press, 2007.

\bibitem{braess2014equilibrated}
Dietrich Braess, Thomas Fraunholz, and Ronald~HW Hoppe, \emph{An equilibrated a
  posteriori error estimator for the interior penalty discontinuous galerkin
  method}, SIAM Journal on Numerical Analysis \textbf{52} (2014), no.~4,
  2121--2136.

\bibitem{braess2009equilibrated}
Dietrich Braess, Veronika Pillwein, and Joachim Sch{\"o}berl,
  \emph{Equilibrated residual error estimates are p-robust}, Computer Methods
  in Applied Mechanics and Engineering \textbf{198} (2009), no.~13-14,
  1189--1197.

\bibitem{braess2008equilibrated}
Dietrich Braess and Joachim Sch{\"o}berl, \emph{Equilibrated residual error
  estimator for edge elements}, Mathematics of Computation \textbf{77} (2008),
  no.~262, 651--672.

\bibitem{brezzi2012mixed}
Franco Brezzi and Michel Fortin, \emph{Mixed and hybrid finite element
  methods}, vol.~15, Springer Science \& Business Media, 2012.

\bibitem{carstensen2009hanging}
Carsten Carstensen and Jun Hu, \emph{Hanging nodes in the unifying theory of a
  posteriori finite element error control}, J. Comput. Math \textbf{27} (2009),
  no.~2-3, 215--236.

\bibitem{carstensen2013effective}
Carsten Carstensen and Christian Merdon, \emph{Effective postprocessing for
  equilibration a posteriori error estimators}, Numerische Mathematik
  \textbf{123} (2013), no.~3, 425--459.

\bibitem{suitesparse4}
Timothy~A. Davis, Patrick~R. Amestoy, Iain~S. Duff, et~al., \emph{{SuiteSparse
  4.2.1, A Suite of Sparse Matrix Software}}, 2013.

\bibitem{feischl2016abstract}
Michael Feischl, Dirk Praetorius, and Kristoffer~G. Van~der Zee, \emph{An
  abstract analysis of optimal goal-oriented adaptivity}, SIAM Journal on
  Numerical Analysis \textbf{54} (2016), no.~3, 1423--1448.

\bibitem{kelly1984self}
D.~W. Kelly, \emph{The self-equilibration of residuals and complementary a
  posteriori error estimates in the finite element method}, International
  Journal for Numerical Methods in Engineering \textbf{20} (1984), no.~8,
  1491--1506.

\bibitem{kikuchi2007remarks}
Fumio Kikuchi and Hironobu Saito, \emph{Remarks on a posteriori error
  estimation for finite element solutions}, Journal of computational and
  applied mathematics \textbf{199} (2007), no.~2, 329--336.

\bibitem{kim2015postprocessing}
Kwang-Yeon Kim, \emph{Postprocessing for guaranteed error bound based on
  equilibrated fluxes}, J. Korean Math. Soc \textbf{52} (2015), no.~5,
  891--906.

\bibitem{ladeveze1983error}
Pierre Ladeveze and Dominique Leguillon, \emph{Error estimate procedure in the
  finite element method and applications}, SIAM Journal on Numerical Analysis
  \textbf{20} (1983), no.~3, 485--509.

\bibitem{luce2004local}
Robert Luce and Barbara~I. Wohlmuth, \emph{A local a posteriori error estimator
  based on equilibrated fluxes}, SIAM Journal on Numerical Analysis \textbf{42}
  (2004), no.~4, 1394--1414.

\bibitem{morin2003local}
Pedro Morin, Ricardo Nochetto, and Kunibert Siebert, \emph{Local problems on
  stars: a posteriori error estimators, convergence, and performance},
  Mathematics of Computation \textbf{72} (2003), no.~243, 1067--1097.

\bibitem{mozolevski2015goal}
Igor Mozolevski and Serge Prudhomme, \emph{Goal-oriented error estimation based
  on equilibrated-flux reconstruction for finite element approximations of
  elliptic problems}, Computer Methods in Applied Mechanics and Engineering
  \textbf{288} (2015), 127--145.

\bibitem{nochetto2008safeguarded}
Ricardo Nochetto, Andreas Veeser, and Marco Verani, \emph{A safeguarded dual
  weighted residual method}, IMA journal of Numerical Analysis \textbf{29}
  (2008), no.~1, 126--140.

\bibitem{pauly2015maxwell}
Dirk Pauly, \emph{On {M}axwell's and {P}oincar{\'e}'s constants.}, Discrete \&
  Continuous Dynamical Systems-Series S \textbf{8} (2015), no.~3.

\bibitem{pauly2009functional}
Dirk Pauly and Sergei Repin, \emph{Functional a posteriori error estimates for
  elliptic problems in exterior domains}, Journal of Mathematical Sciences
  \textbf{162} (2009), no.~3, 393--406.

\bibitem{prager1947approximations}
William Prager and John~L. Synge, \emph{Approximations in elasticity based on
  the concept of function space}, Quarterly of Applied Mathematics \textbf{5}
  (1947), no.~3, 241--269.

\bibitem{repin2008posteriori}
Sergey~I. Repin, \emph{A posteriori estimates for partial differential
  equations}, vol.~4, Walter de Gruyter, 2008.

\bibitem{vejchodsky2006guaranteed}
Tom{\'a}{\v{s}} Vejchodsk{\`y}, \emph{Guaranteed and locally computable a
  posteriori error estimate}, IMA Journal of Numerical Analysis \textbf{26}
  (2006), no.~3, 525--540.

\bibitem{vejchodsky2004local}
Tom{\'a}\v{s} Vejchodsk{\`y}, \emph{Local a posteriori error estimator based on
  the hypercircle method}, Proceedings of the European Congress on
  Computational Methods in Applied Sciences and Engineering (ECCOMAS 2004),
  Jyvaskyl{\"a}, Finland, 2004.

\bibitem{verfurth2009note}
R\"udiger Verf{\"u}rth, \emph{A note on constant-free a posteriori error
  estimates}, SIAM J. Numer. Anal \textbf{47} (2009), no.~4, 3180--3194.

\bibitem{verfurth2013posteriori}
R{\"u}diger Verf{\"u}rth, \emph{A posteriori error estimation techniques for
  finite element methods}, OUP Oxford, 2013.

\end{thebibliography}

\end{document}